\newtheorem{theorem}{Theorem}[section]
\newtheorem{lemma}[theorem]{Lemma}
\newtheorem{example}[theorem]{Example}
\newtheorem{proposition}[theorem]{Proposition}
\newtheorem{corollary}[theorem]{Corollary}
\newtheorem{definition}[theorem]{Definition}
\newtheorem{nono-theorem}{Theorem}
\newtheorem*{theorem*}{Theorem}
\theoremstyle{definition}
\DeclareSymbolFont{AMSb}{U}{msb}{m}{n}
\DeclareMathSymbol{\A}{\mathbin}{AMSb}{"41}
\DeclareMathSymbol{\B}{\mathbin}{AMSb}{"42}
\DeclareMathSymbol{\C}{\mathbin}{AMSb}{"43}
\DeclareMathSymbol{\D}{\mathbin}{AMSb}{"44}
\DeclareMathSymbol{\E}{\mathbin}{AMSb}{"45}
\DeclareMathSymbol{\F}{\mathbin}{AMSb}{"46}
\DeclareMathSymbol{\G}{\mathbin}{AMSb}{"47}
\DeclareMathSymbol{\HH}{\mathbin}{AMSb}{"48}
\DeclareMathSymbol{\I}{\mathbin}{AMSb}{"49}
\DeclareMathSymbol{\N}{\mathbin}{AMSb}{"4E}
\DeclareMathSymbol{\PP}{\mathbin}{AMSb}{"50}
\DeclareMathSymbol{\Q}{\mathbin}{AMSb}{"51}
\DeclareMathSymbol{\R}{\mathbin}{AMSb}{"52}
\DeclareMathSymbol{\SSS}{\mathbin}{AMSb}{"53}
\DeclareMathSymbol{\T}{\mathbin}{AMSb}{"54}
\DeclareMathSymbol{\U}{\mathbin}{AMSb}{"55}
\DeclareMathSymbol{\V}{\mathbin}{AMSb}{"56}
\DeclareMathSymbol{\W}{\mathbin}{AMSb}{"57}
\DeclareMathSymbol{\X}{\mathbin}{AMSb}{"58}
\DeclareMathSymbol{\Y}{\mathbin}{AMSb}{"59}
\DeclareMathSymbol{\Z}{\mathbin}{AMSb}{"5A}
\begin{document}

\title[]
{The positive Schur property on positive projective tensor products and spaces of regular multilinear operators}

\author{Geraldo Botelho}
\address{Faculdade de Matem\'atica, Universidade Federal de Uberl\^andia, Uberl\^andia, MG, Brazil}
\email{botelho@ufu.br}

\author{Qingying Bu}
\address{Department of Mathematics, University of Mississippi, Oxford, MS 38677, USA}
\email{qbu@olemiss.edu}

\author{Donghai Ji}
\address{Department of Mathematics, Harbin University of Science and Technology, Harbin 150080, China}
\email{jidonghai@126.com}

\author{Khazhak Navoyan}
\address{Faculdade de Matem\'atica, Universidade Federal de Uberl\^andia, Uberl\^andia, MG, Brazil}
\email{Khazhakanush@gmail.com}

\date{}

\subjclass[2010]{46B42, 46G25, 46M05, 46B28}
\keywords{Banach lattices, positive Schur property, regular multilinear operators, positive projective tensor product.}
\thanks{The first author is the corresponding author and is supported by CNPq Grant
304262/2018-8 and Fapemig Grant PPM-00450-17.}
\thanks{The fourth author is supported by the Coordena\c c\~ao de Aperfei\c coamento de Pessoal de N\'ivel Superior -- Brasil (CAPES) -- Finance Code 001.}

\maketitle

\begin{abstract} We characterize the positive Schur property in the positive projective tensor products of Banach lattices, we establish the connection with the weak operator topology and we give necessary and sufficient conditions for the space of regular multilinear operators/homogeneous polynomials taking values in a Dedekind complete Banach lattice to have the positive Schur property. 
\end{abstract}

\section{Introduction and background}
A Banach lattice $E$ has the {\it positive Schur property} (PSP in short) if every weakly null sequence formed by positive elements of $E$ is norm null. A lot of research has been done recently on the PSP, see, e.g., \cite{ardakani, baklouti, kaminska, moussa, TRAD, wnuk2009, wnuk2013}. It is worth mentioning that the PSP differs from the usual Schur property for Banach spaces, for instance it is well known that $L_1[0,1]$ has the PSP and fails the Schur property.

Our contribution to the study of the positive Schur property is splitted into three  closely related sections.

In Section \ref{section 2} we caracterize the positive Schur property in the positive projective tensor product $E_1\hat{\otimes}_{|\pi|}\cdots \hat{\otimes}_{|\pi|} E_m$ of the Banach lattices $E_1, \ldots, E_n$ by means of almost Dunford-Pettis operators.

Following the line of a classical result due to Lust (see Theorem \ref{lust}) about the Schur property in the space of weak*-weak continuous operators and recent developments due to Tradacete \cite{TRAD}, in Section \ref{section 3} we establish the connection between the positive Schur property on dual spaces and the weak operator topology. In this section we also show that the space of regular weak*-weak*-continuous linear operators between dual Banach lattices is not a vector lattice with respect to the pointwise ordering.

In Section \ref{section 4} we extend a classical linear result due to Wnuk (see Theorem \ref{wnuk}) to the multilinear and polynomial settings, characterizing the positive Schur property in the spaces of regular multilinear operators and regular homogeneous polynomials taking values in Dedekind complete Banach lattices.




%

For the general theory of Banach lattices we refer to \cite{Mey} 
and for multilinear operators and homogeneous polynomials on Banach spaces we refer to \cite{sean, jorge}. 
Now we recall a few basic notions. For a Banach lattice $E$, $E^+ = \{x \in E : x \geq 0\}$ is the positive cone of $E$ and $E^*$ denotes the topological dual of $E$. Given Banach lattices $E_1, \dots, E_m, E, F$, an $m$-linear operator $T\colon E_1\times \cdots \times E_m \to F$ is said to be:\\
$\bullet$ positive if $T(x_1, \dots, x_m) \in F^+$ whenever $x_1\in E_1^+, \dots, x_m\in E_m^+$;\\
$\bullet$ regular if $T$ is the difference of two positive $m$-linear operators;\\
$\bullet$ an $m$-morphism if $|T(x_1, \dots, x_m)| = T(|x_1|, \dots, |x_m|)$ for all $x_1\in E_1, \dots, x_m\in E_m$.

Let $\mathcal{L}^r(E_1,\dots, E_m; F)$ denote the space of all regular $m$-linear operators from $E_1\times\cdots\times E_m$ to $F$, which becomes a Banach lattice with the regular norm $\|T\|_r = \| \,|T|\, \|$ if $F$ is Dedekind complete (see, e.g., \cite{BuBuskes}). By $E_1\hat{\otimes}_{|\pi|}\cdots \hat{\otimes}_{|\pi|} E_m$ we denote the positive projective tensor product of the Banach lattices $E_1, \ldots, E_m$, that is, the completion of the $m$-fold vector lattice tensor product  $(E_1\Bar{\otimes}\cdots\Bar{\otimes} E_m, \otimes)$ (see \cite{Fr1, fremlin, Schep}) with respect to the positive projective tensor norm defined by
$$
\|u\|_{|\pi|} = \inf\Big\{\sum_{k=1}^n \|x_{1,k}\|\cdots \|x_{m,k}\|: n \in \mathbb{N},\, x_{i,k}\in E_i^+,\, |u| \leq \sum_{k=1}^n x_{1,k}\otimes\cdots\otimes x_{m,k} \Big\},
$$
for every $u \in E_1\Bar{\otimes}\cdots\Bar{\otimes} E_m$. Then $\|\cdot\|_{|\pi|}$ is a lattice norm on $E_1\Bar{\otimes}\cdots\Bar{\otimes} E_m$ (see \cite{BuBuskes, fremlin}), hence $E_1\hat{\otimes}_{|\pi|}\cdots \hat{\otimes}_{|\pi|} E_m$ is a Banach lattice. By $\otimes_m$ we denote the canonical $m$-morphism given by
 $$
\otimes_m \colon E_1\times\cdots\times E_m\longrightarrow E_1\hat{\otimes}_{|\pi|}\cdots \hat{\otimes}_{|\pi|} E_m~,~\otimes_m(x_1,\dots, x_m) = x_1\otimes\cdots\otimes x_m.
$$
 The following result, which follows from \cite[Proposition 3.3]{BuBuskes} combined with \cite[p.\,849 item(c) and (3.2)]{BuBuskes}, is basic for our purposes.

\begin{theorem}\label{basic} Let $E_1, \ldots, E_m$ be Banach lattices. For every  Banach lattice $F$ and any regular $m$-linear operator $T \in {\mathcal L}^r(E_1, \ldots, E_m;F)$ there exists a unique regular linear operator $T^\otimes \in {\mathcal L}^r\left(E_1\hat{\otimes}_{|\pi|}\cdots \hat{\otimes}_{|\pi|} E_m;F \right)$ such that the following diagram is commutative
\begin{displaymath} \hspace{.5 cm}
	\xymatrix{
E_1\times\cdots\times E_m \ar[r]^{\;\;\;\;\;T} \ar[d]_{\otimes_m}\;\;\; & F\\
E_1\hat{\otimes}_{|\pi|}\cdots \hat{\otimes}_{|\pi|} E_m \ar[ur]_{T^\otimes}
}\hspace{.6 cm}
\hspace{.5 cm}
\end{displaymath}
that is,
$$T(x_1, \ldots, x_m) = T^\otimes(x_1 \otimes \cdots \otimes x_m) $$
for all $x_1 \in E_1, \ldots, x_m \in E_m$. Moreover, the correspondence $T \mapsto T^\otimes$ is isometrically isomorphic $\,$ and $\,$ bi-positive $\,$ between $\,$ the $\,$ ordered $\,$ Banach $\,$ spaces $\,$ ${\mathcal L}^r(E_1, \ldots, E_m;F)$ $~$ and $~$ ${\mathcal L}^r\left(E_1\hat{\otimes}_{|\pi|}\cdots \hat{\otimes}_{|\pi|} E_m;F \right)$. If, in addition, $F$ is Dedekind complete, then the correspondence $T \mapsto T^\otimes$ is also lattice homomorphic between the Banach lattices ${\mathcal L}^r(E_1, \ldots, E_m;F)$ and ${\mathcal L}^r\left(E_1\hat{\otimes}_{|\pi|}\cdots \hat{\otimes}_{|\pi|} E_m;F \right)$.
\end{theorem}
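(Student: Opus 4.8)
The plan is to build the correspondence in layers, starting from the purely algebraic vector lattice tensor product and only afterwards passing to the $|\pi|$-completion. First I would reduce to positive operators: since every $T \in {\mathcal L}^r(E_1,\ldots,E_m;F)$ is a difference $T = T_1 - T_2$ of positive $m$-linear operators and the sought correspondence $T \mapsto T^\otimes$ will be linear, it suffices to produce and analyze $T^\otimes$ for $T \geq 0$ and then extend by linearity. For a positive $m$-linear $T\colon E_1\times\cdots\times E_m \to F$, the ordinary universal property of the algebraic tensor product already yields a unique linear map $S$ on $E_1\bar{\otimes}\cdots\bar{\otimes} E_m$ with $T = S\circ \otimes_m$; the real content, which is exactly the universal property of Fremlin's vector lattice tensor product, is that $S$ is \emph{positive} on $(E_1\bar{\otimes}\cdots\bar{\otimes} E_m,\otimes)$. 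This is where the lattice structure genuinely enters, and it is the step I would lean on \cite[Proposition 3.3]{BuBuskes} for.

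Next I would check boundedness for the norm $\|\cdot\|_{|\pi|}$ and extend to the completion. Given $u \geq 0$ in $E_1\bar{\otimes}\cdots\bar{\otimes} E_m$ with $\|u\|_{|\pi|} < 1$, pick a representation $u \leq \sum_k x_{1,k}\otimes\cdots\otimes x_{m,k}$ with $x_{i,k}\in E_i^+$ and $\sum_k \|x_{1,k}\|\cdots\|x_{m,k}\| < 1$; positivity of $S$ then gives $0 \leq Su \leq \sum_k T(x_{1,k},\ldots,x_{m,k})$, so that $\|Su\| \leq \|T\|\sum_k \|x_{1,k}\|\cdots\|x_{m,k}\| \leq \|T\|$. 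Since $\|S\| = \sup\{\|Su\| : u \geq 0,\ \|u\|_{|\pi|}\leq 1\}$ for a positive operator, this yields $\|S\| \leq \|T\|$; the reverse inequality follows from $\|x_1\otimes\cdots\otimes x_m\|_{|\pi|} \leq \|x_1\|\cdots\|x_m\|$ applied to $T = S\circ \otimes_m$. Hence $S$ is a positive linear map with $\|S\| = \|T\|$ that extends uniquely to a positive operator $T^\otimes$ on the completion $E_1\hat{\otimes}_{|\pi|}\cdots \hat{\otimes}_{|\pi|} E_m$, uniqueness following from the density of the linear span of the range of $\otimes_m$.

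Then I would assemble the global statement. Extending $T\mapsto T^\otimes$ linearly to all regular $T$ gives a regular $T^\otimes = T_1^\otimes - T_2^\otimes$; the map is injective because $T = T^\otimes\circ \otimes_m$, and surjective because any regular linear $R$ on the tensor product produces $R\circ \otimes_m \in {\mathcal L}^r(E_1,\ldots,E_m;F)$ whose associated operator is $R$. Bi-positivity is immediate: $T^\otimes \geq 0$ forces $T = T^\otimes\circ \otimes_m \geq 0$ since $\otimes_m$ sends positive tuples to positive tensors, and conversely. For the regular norm I would use the majorant description $\|T\|_r = \inf\{\|S\| : S \geq \pm T,\ S\ \text{positive}\}$: by linearity and bi-positivity, $S$ is a positive majorant of $\pm T$ exactly when $S^\otimes$ is a positive majorant of $\pm T^\otimes$, and $\|S\| = \|S^\otimes\|$ from the positive case, so the two infima coincide and $T\mapsto T^\otimes$ is an isometry for $\|\cdot\|_r$.

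Finally, when $F$ is Dedekind complete both ${\mathcal L}^r$ spaces are Banach lattices, and $T\mapsto T^\otimes$ is, by the above, a bijective bi-positive linear map, i.e. an order isomorphism; an order isomorphism between vector lattices automatically preserves finite suprema and infima, so it is in particular a lattice homomorphism, giving the last assertion. I expect the main obstacle to be the positivity of the algebraic linearization $S$ in the first step: passing from a bare linear factorization to a positive one requires the specific way Fremlin's construction generates the positive cone of $E_1\bar{\otimes}\cdots\bar{\otimes} E_m$, whereas every subsequent step (the norm identity, bi-positivity, and the lattice-homomorphism conclusion) is then essentially formal.
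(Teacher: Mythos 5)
Your proposal is correct and takes essentially the route the paper itself takes, namely outsourcing the crucial step to \cite[Proposition 3.3]{BuBuskes} and then verifying the norm identity, bi-positivity and the lattice-homomorphism property by the standard computations you give (the estimate $\|Su\|\le\|T\|$ from the definition of $\|\cdot\|_{|\pi|}$, the majorant description of $\|\cdot\|_r$, and the fact that a bijective bi-positive linear map between vector lattices preserves lattice operations). The only imprecision is the claim that the ordinary algebraic universal property already yields $S$ on all of $E_1\bar{\otimes}\cdots\bar{\otimes}E_m$: the Fremlin vector lattice tensor product properly contains the algebraic tensor product in general, so the existence of the extension of $S$ to $E_1\bar{\otimes}\cdots\bar{\otimes}E_m$ --- and not merely its positivity --- is part of what \cite[Proposition 3.3]{BuBuskes} supplies; since you lean on that result for precisely this step, the argument is unaffected.
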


 We say that a Banach lattice $E$ is {\it positively isomorphic} to a subspace of the Banach lattice $F$ if there is a positive linear operator $T \colon E \longrightarrow F$ that is a topological isomorphism onto its range. The next straightforward lemma will be helpful several times.

\begin{lemma}\label{nlemma}
Let $E$ and $F$ be Banach lattices such that $E$ is positively isomoprhic to a subspace of $F$. If $F$ has the PSP, then $E$ has the PSP as well.
\end{lemma}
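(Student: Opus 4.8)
The plan is to work directly from the definition of positive isomorphism. Let $T \colon E \to F$ be a positive linear operator that is a topological isomorphism onto its range. In particular $T$ is bounded and bounded below, so there exist constants $0 < c \leq C < \infty$ with
$$
c\,\|x\| \leq \|Tx\| \leq C\,\|x\| \qquad \text{for all } x \in E.
$$
To verify that $E$ has the PSP, I would start with an arbitrary weakly null sequence $(x_n)_n$ in $E$ with $x_n \in E^+$ for every $n$, and show that $\|x_n\| \to 0$ by transporting the sequence to $F$ via $T$, invoking the PSP of $F$ there, and pulling the conclusion back through the lower isomorphism bound.

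The key steps, in order, are as follows. First, since $T$ is positive and $x_n \geq 0$, we have $Tx_n \in F^+$ for every $n$, so $(Tx_n)_n$ is a sequence of positive elements of $F$. Second, since $T$ is a bounded linear operator it is weak-to-weak continuous, hence $x_n \to 0$ weakly in $E$ forces $Tx_n \to 0$ weakly in $F$; thus $(Tx_n)_n$ is a weakly null sequence of positive elements of $F$. Third, because $F$ has the PSP, it follows that $\|Tx_n\| \to 0$. Finally, the lower bound $c\,\|x_n\| \leq \|Tx_n\|$ yields $\|x_n\| \leq c^{-1}\|Tx_n\| \to 0$, so $(x_n)_n$ is norm null. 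As $(x_n)_n$ was an arbitrary weakly null positive sequence, $E$ has the PSP.

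There is essentially no serious obstacle here; the statement is routine and the argument is a direct composition of three standard facts. The only points that require any care are the two observations that a positive operator maps the positive cone into the positive cone (so positivity of the sequence is preserved) and that a bounded linear operator is automatically weak-to-weak continuous (so weak nullity is preserved); both are standard, and the lower isomorphism bound does the rest. This is exactly why the lemma can be labelled straightforward, and it is the reason the positivity hypothesis on $T$ is indispensable: without it one could not guarantee that $(Tx_n)_n$ consists of positive elements, and the PSP of $F$ would not apply.
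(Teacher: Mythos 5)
Your proof is correct and is precisely the routine argument the paper has in mind when it calls the lemma ``straightforward'' and omits the proof: positivity of $T$ preserves the positive cone, boundedness gives weak-to-weak continuity, the PSP of $F$ kills the image sequence in norm, and the lower isomorphism bound pulls the conclusion back to $E$. Nothing is missing.
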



\section{The positive Schur property in the positive projective tensor product}\label{section 2}
It is a long standing problem whether or not the projective tensor product of Banach spaces with the Schur property has the Schur property as well (see, e.g., \cite{pams, gg}). As far as we know, the related problem for Banach lattices has not been investigated yet. Our purpose in this section is to give characterizations of the positive Schur property on the positive projective tensor product of Banach lattices.

To do so, we recall the following class of operators introduced by S\'anchez \cite{sanchez} and developed by several authors, see, e.g. \cite{aqzzouz, retbi, wnuk94}: a linear operator $u \colon E \longrightarrow F$ from the Banach lattice $E$ to the Banach space $F$ is said to be an {\it almost Dunford-Pettis operator} if $(u(x_n))_n$ is norm null in $F$ for every pairwise disjoint weakly null sequence $(x_n)_n$ in $E$.

%

For Banach lattices $E$ and $F$, it is easy to see that the set of almost Dunford-Pettis regular operators from $E$ to $F$  is a closed linear subspace of ${\mathcal L}^r(E;F)$. 

%
%
%
%
%
%

\begin{theorem}
The following are equivalent for the Banach lattices $E_1\dots, E_m$.

\begin{itemize}
    \item [(1)] $E_1\hat{\otimes}_{|\pi|}\cdots \hat{\otimes}_{|\pi|} E_m$ has the PSP.
    \item[(2)] For every Banach lattice $G$ and any $T\in \mathcal{L}^r(E_1,\dots, E_m; G)$, the linearization $T^\otimes \in \mathcal{L}^r(E_1\hat{\otimes}_{|\pi|}\cdots \hat{\otimes}_{|\pi|} E_m; G)$ is an almost Dunford-Pettis operator. 
    \item[(3)] For every Banach lattice $G$ and any $T\in \mathcal{L}^r(E_1,\dots, E_m; G)$, there exists a Banach lattice $H$, an  $m$-linear operator $A\in \mathcal{L}^r(E_1,\dots, E_m; H)$ and an almost Dunford-Pettis operator $u \in \mathcal{L}^r(H; G)$ such that $T = u\circ A$.
\end{itemize}
\end{theorem}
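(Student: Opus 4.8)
The plan is to prove the cyclic chain of implications $(1)\Rightarrow(2)\Rightarrow(3)\Rightarrow(1)$, using Theorem~\ref{basic} to transfer between the multilinear and linearized pictures throughout.

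For $(1)\Rightarrow(2)$, I would argue directly from the definition of the positive Schur property. Let $(u_n)_n$ be a pairwise disjoint weakly null sequence in $E_1\hat{\otimes}_{|\pi|}\cdots\hat{\otimes}_{|\pi|}E_m$; I must show $T^\otimes(u_n)\to 0$ in $G$. The key observation is that in a Banach lattice the positive Schur property already controls \emph{disjoint} weakly null sequences after passing to positive parts: since $(u_n)_n$ is pairwise disjoint, so are $(u_n^+)_n$ and $(u_n^-)_n$, and because the lattice operations are weakly sequentially continuous on a lattice with the PSP (or more directly, because a disjoint weakly null sequence has weakly null positive and negative parts in a lattice—this is where I would invoke the standard structure of disjoint sequences), both $(u_n^+)_n$ and $(u_n^-)_n$ are positive weakly null sequences. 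Property~(1) forces $\|u_n^+\|_{|\pi|}\to 0$ and $\|u_n^-\|_{|\pi|}\to 0$, hence $\|u_n\|_{|\pi|}\to 0$, and then $T^\otimes(u_n)\to 0$ follows from continuity of $T^\otimes$. In fact this shows the stronger statement that \emph{every} regular (indeed every bounded) operator out of the tensor product is almost Dunford–Pettis, so the universally quantified statement over $G$ and $T$ costs nothing extra.

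The implication $(2)\Rightarrow(3)$ is essentially a tautology via the factorization built into Theorem~\ref{basic}: given $T\in\mathcal{L}^r(E_1,\dots,E_m;G)$, take $H=E_1\hat{\otimes}_{|\pi|}\cdots\hat{\otimes}_{|\pi|}E_m$, let $A=\otimes_m$ be the canonical $m$-morphism (which is regular, being an $m$-morphism between Banach lattices), and set $u=T^\otimes$. Then $T=T^\otimes\circ\otimes_m=u\circ A$ by the commutative diagram, and $u$ is almost Dunford–Pettis by hypothesis~(2). I would only need to check that $\otimes_m$ is regular, which is immediate from its being an $m$-morphism.

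The real content is $(3)\Rightarrow(1)$, and this is where I expect the main obstacle. The natural strategy is to apply~(3) to the canonical example $G=E_1\hat{\otimes}_{|\pi|}\cdots\hat{\otimes}_{|\pi|}E_m$ and $T=\otimes_m$ itself. Then $\otimes_m=u\circ A$ for some almost Dunford–Pettis $u\colon H\to E_1\hat{\otimes}_{|\pi|}\cdots\hat{\otimes}_{|\pi|}E_m$ and some $A\in\mathcal{L}^r(E_1,\dots,E_m;H)$. Linearizing via Theorem~\ref{basic} gives $\mathrm{id}=(\otimes_m)^\otimes=u\circ A^\otimes$ on the tensor product, exhibiting the identity operator on $E_1\hat{\otimes}_{|\pi|}\cdots\hat{\otimes}_{|\pi|}E_m$ as a factor through the almost Dunford–Pettis operator $u$. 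The delicate point is to convert this factorization of the identity into the PSP: given a positive weakly null sequence $(u_n)_n$ in the tensor product, I would write $u_n=u(A^\otimes(u_n))$ and try to conclude $\|u_n\|\to 0$. The obstruction is that $u$ kills only \emph{disjoint} weakly null sequences, whereas $(A^\otimes(u_n))_n$ need not be disjoint. To overcome this I expect to need a disjointification argument: reduce a general positive weakly null sequence to a disjoint one via a standard small-perturbation / gliding-hump technique, using that $A^\otimes$ is a regular (hence weakly continuous on order-bounded pieces) operator to transport disjointness or near-disjointness from the domain. Controlling this transfer of disjointness through $A^\otimes$, and ensuring the perturbation errors remain summable, is the technical heart of the argument; once a disjoint subsequence is produced, almost Dunford–Pettisness of $u$ delivers norm-convergence to zero, and a subsequence argument upgrades this to the full sequence.
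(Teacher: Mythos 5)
Your implications $(1)\Rightarrow(2)$ and $(2)\Rightarrow(3)$ are fine and essentially match the paper: the paper handles $(1)\Rightarrow(2)$ by quoting the characterization of almost Dunford--Pettis operators in \cite[Theorem 2.2]{aqzzouz} (an operator is almost Dunford--Pettis if and only if it is norm null on every \emph{positive} weakly null sequence), whereas you work from the definition via the standard fact that a disjoint weakly null sequence has weakly null positive and negative parts. That fact is true, but your stated justification (``weak sequential continuity of the lattice operations under the PSP'') is not the right reason --- it holds for disjoint sequences in any Banach lattice and has nothing to do with the PSP; it is precisely the content of the cited characterization.

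The genuine gap is in $(3)\Rightarrow(1)$. You correctly derive the factorization $\mathrm{id}=u\circ A^{\otimes}$ of the identity of $E_1\hat{\otimes}_{|\pi|}\cdots\hat{\otimes}_{|\pi|}E_m$ through the almost Dunford--Pettis operator $u$, but then you stop at the point you yourself call ``the technical heart'': you propose, without executing it, a gliding-hump disjointification that would transport disjointness through $A^{\otimes}$. This is both unproved and the wrong move --- regular operators do not preserve or approximately preserve disjointness, so there is no reason a positive weakly null sequence $(z_n)$ yields anything disjoint after applying $A^{\otimes}$. The correct (and much shorter) resolution, which the paper uses, is to apply \cite[Theorem 2.2]{aqzzouz} to $u$ itself: an almost Dunford--Pettis operator automatically sends \emph{every} positive weakly null sequence, disjoint or not, to a norm null sequence. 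Writing $A^{\otimes}=(A^{\otimes})^{+}-(A^{\otimes})^{-}$ as a difference of positive (hence weak-weak continuous) operators, both $((A^{\otimes})^{+}z_n)_n$ and $((A^{\otimes})^{-}z_n)_n$ are positive weakly null, so $u$ kills each of them in norm and $z_n=u((A^{\otimes})^{+}z_n)-u((A^{\otimes})^{-}z_n)\to 0$. In short: the disjoint-versus-positive issue you flag is a known, quotable equivalence in the definition of almost Dunford--Pettis operators, not something to be re-derived by perturbation at the level of $A^{\otimes}$; without invoking it (or reproving it), your argument for $(3)\Rightarrow(1)$ is incomplete.
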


\begin{proof}
$(1)\Rightarrow(2)$ Let $(z_n)_n\subseteq (E_1\hat{\otimes}_{|\pi|}\cdots \hat{\otimes}_{|\pi|} E_m)^+$ be such that $z_n\overset{w}{\longrightarrow} 0$. By the PSP of $E_1\hat{\otimes}_{|\pi|}\cdots \hat{\otimes}_{|\pi|} E_m$ we have $\|z_n\|\longrightarrow 0$, and the continuity of $T^\otimes$ gives $T^\otimes(z_n) \longrightarrow T^\otimes (0) = 0$. By \cite[Theorem 2.2]{aqzzouz} it follows that $T^\otimes$ is an almost Dunford-Pettis operator.

\vspace{.1 in}

$(2)\Rightarrow(3)$ Just take $H = E_1\hat{\otimes}_{|\pi|}\cdots \hat{\otimes}_{|\pi|} E_m$, $A = \otimes_m$ and $u = T^\otimes$.

\vspace{.1 in}

$(3)\Rightarrow(1)$ Take $G = E_1\hat{\otimes}_{|\pi|}\cdots\hat{\otimes}_{|\pi|} E_m$ and consider the canonical $m$-linear regular operator $\otimes_m \in \mathcal{L}^r(E_1,\dots, E_m; E_1\hat{\otimes}_{|\pi|}\cdots\hat{\otimes}_{|\pi|} E_m)$. It is clear that the linearization $(\otimes_m)^\otimes$ of $\otimes_m$ is the identity operator on $E_1\hat{\otimes}_{|\pi|}\cdots\hat{\otimes}_{|\pi|} E_m$ (it follows from the uniqueness in Theorem \ref{basic}).  By $(3)$ there exist a Banach lattice $H$, a regular $m$-linear operator $A\in \mathcal{L}^r(E_1,\dots, E_m; H)$ and an almost Dunford-Pettis regular linear operator $u \in \mathcal{L}^r(H, E_1\hat{\otimes}_{|\pi|}\cdots \hat{\otimes}_{|\pi|} E_m)$ such that $\otimes_m = u\circ A$. For all $x_1\in E_1$,$\ldots$, $x_m\in E_m$,
\begin{align*}(u\circ A)^\otimes (x_1\otimes\cdots\otimes x_m)& = (u\circ A)(x_1,\dots, x_m) = u (A(x_1,\dots, x_m)) \\
&=u(A^\otimes(x_1\otimes\cdots\otimes x_m)) = (u\circ A^\otimes)(x_1\otimes\cdots\otimes x_m).
\end{align*}

$$\begin{tikzcd}
E_1\times\cdots\times E_m \arrow{d}[swap]{\otimes_m} \arrow{r}{A} & H\arrow{r}{u} & G\\
E_1\hat{\otimes}_{|\pi|}\cdots\hat{\otimes}_{|\pi|}E_m \arrow{ur}{A^\otimes} \arrow{urr}[swap]{(u\circ A)^\otimes}
\end{tikzcd}$$

\medskip
Since both $(u\circ A)^\otimes$ and $u\circ A^\otimes$ are regular linear operators, the uniqueness in Theorem \ref{basic} gives
$$
id_{E_1\hat{\otimes}_{|\pi|}\cdots \hat{\otimes}_{|\pi|} E_m} = ({\otimes_m})^\otimes = (u\circ A)^\otimes = u\circ A^\otimes.
$$
Take $(z_n)_n\subseteq E_1\hat{\otimes}_{|\pi|}\cdots\hat{\otimes}_{|\pi|} E_m$ such that $z_n \geq 0$ for every $n\in \N$ and $z_n\overset{w}{\longrightarrow}0$. Continuous linear operators are weak-weak continuous, so
$$(A^\otimes)^+ (z_n)\overset{w}{\longrightarrow} 0{\rm ~and~}(A^\otimes)^- (z_n)\overset{w}{\longrightarrow} 0.$$
Since $(A^\otimes)^+ (z_n)\geq 0$ and $(A^\otimes)^- (z_n)\geq 0$ for every $n$, and $u$ is an almost Dunford-Pettis operator, a second application of \cite[Theorem 2.2]{aqzzouz} gives
$$u((A^\otimes)^+ (z_n))\longrightarrow 0 {\rm ~and~}u((A^\otimes)^- (z_n))\longrightarrow 0, $$
from which it follows that
%
%
%
%
%
%
%
%
%
\begin{align*}
z_n &= id_{E_1\hat{\otimes}_{|\pi|}\cdots\hat{\otimes}_{|\pi|} E_m}(z_n) = u(A^\otimes (z_n)) = u \Big((A^\otimes)^+(z_n) - (A^\otimes)^-(z_n)\Big) \\
&= u\Big ((A^\otimes)^+(z_n)\Big) - u\Big((A^\otimes)^-(z_n)\Big) \longrightarrow 0.
\end{align*}
Hence, $E_1\hat{\otimes}_{|\pi|}\cdots \hat{\otimes}_{|\pi|} E_m$ has the PSP.
\end{proof}

\section{Connections with the weak operator topology}\label{section 3}

 Spaces of weak*-weak linear operators on dual Banach spaces proved to be closely connected to the investigation of the Schur property on Banach and locally convex spaces (see, e.g., \cite{pams, Lust, R3}). The following result due to Lust is illustrative:
\begin{theorem}\cite{Lust}\label{lust} The Banach spaces $X$ and $Y$ have the Schur property if and only if the closed subspace of $\mathcal{L}(X^\ast, Y)$ consisting of all weak*-weak continuous operators (that is, all operators $T\in \mathcal{L}(X^\ast, Y)$ for which $T^\ast(Y^\ast)\subseteq X$) has the Schur property.
\end{theorem}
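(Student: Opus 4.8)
The plan is to write $\mathcal{W}$ for the closed subspace of $\mathcal{L}(X^*,Y)$ consisting of the weak*-weak continuous operators (those $T$ with $T^*(Y^*)\subseteq X$) and to treat the two implications separately, the reverse one being elementary.

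For the necessity I would embed $X$ and $Y$ isometrically into $\mathcal{W}$. Fixing $y_0\in Y$ with $\|y_0\|=1$, the assignment $x\mapsto\langle\,\cdot\,,x\rangle\,y_0$ sends $X$ isometrically into $\mathcal{W}$: each such rank-one map is weak*-weak continuous precisely because $x\in X$ is a weak*-continuous functional on $X^*$. Symmetrically, fixing $x_0\in X$ with $\|x_0\|=1$, the map $y\mapsto\langle\,\cdot\,,x_0\rangle\,y$ embeds $Y$ isometrically into $\mathcal{W}$. Since the Schur property passes to closed subspaces, assuming it for $\mathcal{W}$ forces it on both $X$ and $Y$.

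For the sufficiency I would start from a weakly null sequence $(T_n)\subseteq\mathcal{W}$ and aim to show $\|T_n\|\to 0$. For fixed $x^*\in X^*$ and $y^*\in Y^*$ the evaluation $T\mapsto\langle Tx^*,y^*\rangle$ is a bounded linear functional on $\mathcal{W}$, so $\langle T_n x^*,y^*\rangle\to 0$; holding $x^*$ fixed this says $T_n x^*\to 0$ weakly in $Y$, hence $\|T_n x^*\|\to 0$ by the Schur property of $Y$, and symmetrically $\|T_n^* y^*\|\to 0$ for each fixed $y^*$ by the Schur property of $X$. The extra ingredient I would invoke is uniformity: the weakly null sequence $\{T_n\}$ is relatively weakly compact in $\mathcal{W}$, and a characterization of relative weak compactness in spaces of operators (in the spirit of results of Grothendieck and Kalton) then gives that $\bigcup_n T_n^*(B_{Y^*})$ is relatively weakly compact in $X$; the Schur property of $X$ upgrades this to relative norm compactness.

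To finish I would argue by contradiction with a collective-compactness step, and this is where I expect the genuine obstacle to lie. If $\|T_n\|\not\to 0$, then after passing to a subsequence there are $\varepsilon>0$ and $y_k^*\in B_{Y^*}$ with $\|T_{n_k}^* y_k^*\|\ge\varepsilon$; by the relative norm compactness just obtained I may pass to a further subsequence so that $T_{n_k}^* y_k^*\to z$ in norm, where $\|z\|\ge\varepsilon$. Then for every $x^*\in X^*$,
\[
\langle x^*,z\rangle=\lim_k\langle x^*,T_{n_k}^* y_k^*\rangle=\lim_k\langle T_{n_k}x^*,y_k^*\rangle=0,
\]
since $|\langle T_{n_k}x^*,y_k^*\rangle|\le\|T_{n_k}x^*\|\to 0$ by the previous paragraph, forcing $z=0$ and contradicting $\|z\|\ge\varepsilon$. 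The delicate point is precisely the uniform relative weak compactness step, i.e.\ passing from relative weak compactness of the sequence $(T_n)$ inside $\mathcal{W}$ to relative weak compactness of the union $\bigcup_n T_n^*(B_{Y^*})$ in $X$ (equivalently, controlling the operators evaluated at the varying functionals $y_k^*$ rather than at a single fixed one); everything else is a routine combination of the Schur property with the compactness produced there.
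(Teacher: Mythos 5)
First, note that the paper does not prove this statement at all: Theorem~\ref{lust} is quoted from Lust's paper \cite{Lust} as background, so there is no in-paper proof to compare with and your attempt has to be judged on its own. Your necessity direction (embedding $X$ and $Y$ isometrically into $\mathcal{W}$ via rank-one operators and using that the Schur property passes to subspaces) is correct. In the sufficiency direction, however, the step you yourself single out as the crux is a genuine gap: relative weak compactness of the set $\{T_n\}$ in $\mathcal{W}$ does \emph{not} yield that $\bigcup_n T_n^*(B_{Y^*})$ is relatively weakly compact in $X$. The known characterizations of weak compactness in spaces of (weak*-weak continuous, or compact) operators in the Grothendieck/Kalton/Collins--Ruess line give, in their necessary direction, only the \emph{pointwise} conditions --- $\{T_n^* y^*\}_n$ relatively weakly compact for each \emph{fixed} $y^*$ --- and there is no general principle upgrading this to the collective statement over the whole ball $B_{Y^*}$ (the natural attempt, pairing a weakly convergent sequence $T_{n_k}x^*$ with a weak*-convergent sequence $y_k^*$, fails exactly because such pairings need not converge to the pairing of the limits). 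As written, that step is an appeal to a theorem that does not exist in the form you need.

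The good news is that the step is also unnecessary, and your own final display already closes the gap. Having chosen $y_k^*\in B_{Y^*}$ with $\|T_{n_k}^* y_k^*\|\ge\varepsilon$, your computation
$\langle x^*, T_{n_k}^* y_k^*\rangle=\langle T_{n_k}x^*,y_k^*\rangle$ together with $|\langle T_{n_k}x^*,y_k^*\rangle|\le\|T_{n_k}x^*\|\to 0$ (Schur property of $Y$ at the fixed $x^*$) shows directly that the sequence $(T_{n_k}^* y_k^*)_k$ is weakly null \emph{in $X$} --- this is where the hypothesis $T^*(Y^*)\subseteq X$ is used --- so the Schur property of $X$ gives $\|T_{n_k}^* y_k^*\|\to 0$, contradicting $\|T_{n_k}^* y_k^*\|\ge\varepsilon$ with no compactness extraction at all. (Equivalently, and symmetrically: pick $x_n^*\in B_{X^*}$ with $\|T_n x_n^*\|\ge\varepsilon$; for each fixed $y^*$ one has $|\langle T_n x_n^*,y^*\rangle|\le\|T_n^* y^*\|\to 0$ by the Schur property of $X$, so $(T_n x_n^*)$ is weakly null, hence norm null, in $Y$.) So you should delete the weak-compactness detour entirely; the two pointwise facts you established in the second paragraph of the sufficiency argument are all that is needed.
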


For Banach lattices, Tradacete \cite{TRAD} introduced the weak operator topology-PSP in ${\mathcal L}^r(E;F)$ and the weak* operator topology-PSP in ${\mathcal L}(E;F^*)$ to study the dual positive Schur property (see \cite[Theorems 2 and 3]{TRAD}).

By $\mathcal{L}^r_{w^\ast}(F^{\ast}, E^\ast)$  we denote the ordered vector subspace of $\mathcal{L}^r(F^{\ast}, E^\ast)$ consisting of all weak*-weak* continuous regular linear operators from the dual $E^*$ of the Banach lattice $E$ to the dual $F^*$ of the Banach lattice $F$. Combining the strategies of Lust and Tradacete, in this section we prove the equivalence between the PSP on $E^*$ and $F^*$ with the weak operator topology-PSP 
on $\mathcal{L}^r_{w^\ast}(F^{\ast\ast};E^\ast)$.


First, let us stress that $\mathcal{L}^r_{w^\ast}(F^{\ast}, E^\ast)$ is not a vector lattice with the pointwise ordering in general.

\begin{example}\rm In order to establish that $\mathcal{L}^r_{w^\ast}(F^{\ast}, E^\ast)$ fails to be a vector lattice with the pointwise ordering in general, we will show that there are Banach lattices $E$ and $F$ and an operator $V \in {\mathcal L}^r(E;F)$ such that $|V^\ast|\notin \mathcal{L}^r_{w^\ast}(F^{\ast}, E^\ast)$. This is enough because $V^\ast \in \mathcal{L}^r_{w^\ast}(F^{\ast}, E^\ast)$.

Our reasoning is based on \cite[Example 1.74]{AB}, where it is claimed that there is an operator $T\colon \ell_1\longrightarrow \ell_\infty$ such that $|T^\ast| < |T|^\ast$. It just so happens that we noticed that there is a gap in this example: in the $13^{th}$ line of the example, we read ``$\langle |T|' \phi, e \rangle = \langle \phi, |T|e\rangle$", while $|T|$ cannot be applied on $e = (1,1,\dots) \notin \ell_1$ in that particular example.


Fortunately, V. G. Troitsky and A. W. Wickstead, to whom we express our thankfulness, constructed correct examples of operators $V\colon c\longrightarrow c$, where $c$ is the closed subspace of $\ell_\infty$ formed by all convergent sequences, such that $|V^\ast| \neq |V|^\ast$ (private communications to the authors). For the benefit of the reader, we describe these two examples.

The Troitsky operator. It is plain that
$$T \colon c\longrightarrow c~,~T(x_1, x_2,\dots) = (x_1 - x_2, x_2 - x_3, x_3 - x_4,\dots).
$$
is a regular operator. Let us prove that $|T|$ exists and
$$
|T|(x_1, x_2,\dots)
= (x_1 + x_2, x_2 + x_3, x_3 + x_4,\dots) {\rm~for~every~} 0 \leq (x_1, x_2, \dots) \in c.$$
Considering the operator
$$S \colon c\longrightarrow c~,~S(x_1, x_2, \dots) = (x_1 + x_2, x_2 + x_3, \dots),$$ we have to prove that $|T|$ exists and $|T| =S$.
Indeed, we have $\pm Tx \leq Sx$ for every $x\in c^+$, so that $\pm T\leq S$. On the
other hand, suppose that $\pm T \leq R$ for some $R \in {\mathcal L}^r(c;c)$. Clearly, $R \geq 0$.
It suffices to show that $R\geq S$, because it would mean that $S = \sup\{T, -T\}$, and the latter is the definition of $|T|$. Note that $\pm Tx\leq Rx$ for every $x\in c^+$, therefore $|Tx| \leq Rx$ for every $x\in c^+$. For a fixed $x = (x_1, x_2, \dots) \in c^+$,
$$
Rx \geq R(x_1, 0, \dots) \geq T(x_1, 0, \dots) = (x_1, 0, \dots).
$$
For every $n > 1$,
\begin{align*}
Rx &\geq R(0,\dots,0,\overset{n}{\overbrace{x_n}}, 0,\dots)\geq |T(0,\dots,0, \overset{n}{\overbrace{x_n}}, 0,\dots)|
= |(0,\dots,0,\overset{n-1}{\overbrace{-x_n}}, \overset{n}{\overbrace{x_n}}, 0,\dots)| \\
 &= (0,\dots,0,\overset{n-1}{\overbrace{x_n}}, \overset{n}{\overbrace{x_n}}, 0,\dots).
\end{align*}
hence,
\begin{align*} Rx &\geq R(0,\dots,0,\overset{n}{\overbrace{x_n}}, \overset{n+1}{\overbrace{x_{n+1}}}, 0,\dots) =R\Big((0,\dots,0,\overset{n}{\overbrace{x_n}}, 0,\dots) + (0,\dots,0, \overset{n+1}{\overbrace{x_{n+1}}}, 0,\dots)\Big)\\
&= R(0,\dots,0,\overset{n}{\overbrace{x_n}}, 0,\dots) + R(0,\dots,0, \overset{n+1}{\overbrace{x_{n+1}}}, 0,\dots)\\
&\geq (0,\dots,0,\overset{n-1}{\overbrace{x_n}}, \overset{n}{\overbrace{x_n}}, 0,\dots) + (0,\dots,0,\overset{n}{\overbrace{x_{n+1}}}, \overset{n+1}{\overbrace{x_{n+1}}}, 0,\dots)\\
&= (0,\dots,0,\overset{n-1}{\overbrace{x_n}}, \overset{n}{\overbrace{x_n + x_{n+1}}}, \overset{n+1}{\overbrace{x_{n+1}}}, 0,\dots)\geq (0,\dots,0, \overset{n}{\overbrace{x_n + x_{n+1}}}, 0,\dots) =: y^{(n)}.
\end{align*}
It is easy to see that the sequence $(y^{(n)})_n$ has supremum in $c$, which
is exactly $Sx$. It follows that $Rx \geq Sx$ for every $ x\in c^+$, therefore $R \geq S$. This proves the claim that $|T| = S$.

Let $\varepsilon_\infty \in (c^{\ast})^+$ be defined by
\begin{equation} \label{linfunct}
\varepsilon_\infty(x_1, x_2, \dots) = \lim_{n \to \infty} x_n {\rm ~~for~every~} x = (x_1, x_2,\dots) \in c.
\end{equation}
Fix $e: = (1, 1, \ldots) \in c$ and note that
$$
\langle |T|^\ast \varepsilon_\infty, e\rangle = \langle \varepsilon_\infty, |T|e\rangle = \varepsilon_\infty(2,2,\dots) = 2.
$$
Since $c^\ast$ is a dual space, it is Dedekind complete and hence the Riesz-Kantorovich formula yields that
$$
|T^\ast|\varepsilon_\infty = \sup\{|T^\ast \psi|: |\psi|\leq \varepsilon_\infty\}.
$$
Let $\psi \in c^\ast$ satisfy $|\psi|\leq \varepsilon_\infty$ and let $x = (x_1, x_2,\dots) \in c^+$. Then,
\begin{align*}|\langle T^\ast \psi,x\rangle| &= |\langle \psi, T(x) \rangle| \leq \langle|\psi|, |Tx| \rangle\leq \langle \varepsilon_\infty, |Tx|  \rangle
= \langle\varepsilon_\infty, (|x_1 - x_2|, |x_2 - x_3|,\dots) \rangle \\
&= \lim_{n\to\infty}|x_n - x_{n+1}| = 0.
\end{align*}
Therefore, $T^\ast \psi = 0$, and hence $|T^\ast| \varepsilon_\infty = 0$.
In particular, $\langle |T^\ast| \varepsilon_\infty, e \rangle = 0 \neq 2 = \langle|T|^\ast \varepsilon_\infty, e\rangle$, proving that $|T^\ast| < |T|^\ast$.

The Wickstead operator. Again, it is plain that
$$W\colon  c\longrightarrow c~,~Wx = W(x_1, x_2, \dots) = x - e \cdot\lim_{k\to\infty} x_k = (x_k - \lim_{j\to\infty} x_j)_{k=1}^\infty,$$
is a regular operator. For any $x = (x_k)_{k=1}^\infty = (x_1, x_2, \dots) \in c^+$,
\begin{align}x = (x_1, x_2, \dots) & = \sup\left\{y = (y_1, y_2, \dots)\in c: 0\leq y\leq x {\rm ~ and~} \lim_{k\to \infty} y_k = 0\right\} \label{eqqe} \\
&= \sup\{y = (y_1, y_2, \dots)\in c_0: 0\leq y\leq x\}.\nonumber
\end{align}
Denoting by $I$ the identity operator on $c$, $I \geq W$ and $I \geq 0$. On the other hand, if $U\geq W, U \geq 0$, that is, if $U$ is an arbitrary upper bound for $\{W, 0\}$,  and $x = (x_1, x_2,\dots) \geq 0$, then
$$
 Uy \geq Wy {\rm ~and~} Ux \geq Uy {\rm ~for~every~} y \in c {\rm~with~}  0 \leq y \leq x.
$$
Therefore,
$
Ux \geq Wy$ for every $y \in c$ with $0 \leq y \leq x$, implying that $Ux$ is an upper bound for $\{Wy: y\in c,\; 0\leq y\leq x\}$.
So, $Ux$ is also an upper bound for
$$\left\{Wy: y = (y_k)\in c,\; 0\leq y\leq x {\rm ~and~} \lim_{k\to\infty} y_k = 0\right\} = \left\{y: y \in c,\; 0\leq y\leq x {\rm ~ and~} \lim_{k\to \infty} y_k = 0\right\},$$
where the last equality holds because $Wy = y - \lim\limits_{k\to \infty} y_k \cdot e = y$ whenever $\lim\limits_{k\to \infty} y_k = 0$.
From (\ref{eqqe}) we get that $Ux \geq x = Ix$ any $x\in c^+$, proving that $U\geq I$.
It follows that $W^+$ exists and is equal to $I$, because above it was observed that $I$ is an upper bound for $\{W, 0\}$, and the arbitrary upper bound $U$ of $\{W, 0\}$ is greater than $I$. Thus, $I = \sup\;\{W, 0\} = W^+$.

Considering again the linear functional $\varepsilon_\infty \in (c^*)^+$ (see (\ref{linfunct})),
 we have
$$W^\ast (\varepsilon_{\infty}) (x) = \varepsilon_{\infty} (Wx) = \lim_{k\to\infty} \left\{\left(x_k-\lim_{j\to\infty} x_j\right)_{k=1}^\infty\right\}= 0$$ for all $x = (x_1, x_2, \dots)\in c$, so $W^\ast (\varepsilon_\infty) =0$.
Since $c^\ast$ is Dedekind complete and $\varepsilon_\infty$ is an atom,
$$(W^\ast)^+ (\varepsilon_\infty) = \sup\{W^\ast \mu : 0 \leq \mu \leq \varepsilon_\infty\} = 0.$$
As $W^+ = I$, in particular we have $(W^+)^\ast (\varepsilon_\infty) = I^\ast (\varepsilon_\infty) = \varepsilon_\infty \neq 0$, hence $(W^\ast)^+ \neq (W^+)^\ast$ and it follows that $|W^\ast| \neq |W|^\ast$.


Let $V$ be either the Troitsky operator $T$ or the Wickstead operator $W$. Suppose that $|V^*| \in \mathcal{L}^r_{w^\ast}(F^{\ast}, E^\ast)$. In this case there exists an operator $S \colon c\longrightarrow c$ such that $|V^\ast| = S^\ast$. 
Then, $\pm V^{\ast\ast} \leq S^{\ast\ast}$, so restricting these second adjoints to $E$ we get that $\pm V \leq S$, hence $|V| \leq S$. Consider the canonical unit vectors $(e_i)_{i}$ and note that, for all $ x\in E^+$ and $i \in \mathbb{N}$,
$$
(Sx)_i = \langle e_i^\ast, Sx\rangle = \langle S^\ast e_i^\ast, x \rangle = \langle |V^\ast| e_i^\ast, x \rangle
\leq \langle |V|^\ast e_i^\ast, x \rangle = \langle e_i^\ast, |V|x \rangle = (|V|x)_i,
$$
from which it follows that $S(x)\leq |V|(x)$ for every $ x\in E^+$, that is, $S\leq |V|$.
From $S = |V|$ we conclude that $ |V|^\ast = S^\ast = |V^\ast|$, a contradiction that shows that $|V^*| \notin \mathcal{L}^r_{w^\ast}(F^{\ast}, E^\ast)$. Since $V^* \in \mathcal{L}^r_{w^\ast}(F^{\ast}, E^\ast)$, it follows that $\mathcal{L}^r_{w^\ast}(F^{\ast}, E^\ast)$ fails to be a vector lattice.
\end{example}

For Banach spaces $E$ and $F$, we recall that a sequence of operators $(T_n)\subseteq \mathcal{L}(E;F)$ converges to zero in the {\it weak operator topology} (wot) if $\langle y^\ast, T_n x \rangle \longrightarrow 0$ for all $x\in E$ and $y^\ast \in F^\ast$.

The connection of the PSP with the space of weak*-weak* continuous operators is made by the following concept introduced by Tradacete:


\begin{definition}\cite{TRAD} \rm Let $E$ and $F$ be Banach lattices.
A subspace $X$ of $\mathcal{L}^r(E; F)$ has the {\it wot-PSP} if for every sequence of positive operators $(T_n)\subseteq X^+$ with $T_n\longrightarrow 0$ in the weak operator topology, it follows that $\|T_n\|\longrightarrow 0$.
\end{definition}

\begin{theorem}\label{wot}
The following are equivalent for the Banach lattices $E$ and $F$:

\begin{enumerate}
\item $E^\ast$ and $F^\ast$ have the PSP.
\item $\mathcal{L}^r_{w^\ast}(F^{\ast\ast};E^\ast)$ has the wot-PSP.
\item $(E\hat{\otimes}_{|\pi|}F)^\ast$ has the PSP.
\end{enumerate}
\end{theorem}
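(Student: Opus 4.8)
\emph{The backbone of the argument is the duality}
$$(E\hat{\otimes}_{|\pi|}F)^\ast\ \cong\ \mathcal{L}^r(E;F^\ast),$$
a lattice isometry obtained from Theorem \ref{basic} (with $m=2$ and $G=\R$, using that the order dual of a Banach lattice is its topological dual) together with the standard identification of regular bilinear forms with regular operators into the Dedekind complete lattice $F^\ast$. In addition, $R\mapsto R^\ast$ is a bipositive isometry of $\mathcal{L}^r(E;F^\ast)$ into $\mathcal{L}^r(F^{\ast\ast};E^\ast)$ whose image consists of weak*-weak* continuous operators, and every positive weak*-weak* continuous $S\colon F^{\ast\ast}\to E^\ast$ has a positive pre-adjoint $R\colon E\to F^\ast$ with $S=R^\ast$ (for $x\in E^+,y\in F^+$ one has $\langle Rx,y\rangle=\langle S\hat y,x\rangle\ge 0$). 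Hence the positive cones of $\mathcal{L}^r_{w^\ast}(F^{\ast\ast};E^\ast)$ and of $\mathcal{L}^r(E;F^\ast)$ correspond, which is all that $(2)$ and $(3)$ require. Note that $R\mapsto R^\ast$ is \emph{not} a lattice isomorphism, which is precisely the content of the Example; but only the cones matter here. I would then prove the cycle $(1)\Rightarrow(2)\Rightarrow(3)\Rightarrow(1)$.

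For $(2)\Rightarrow(3)$, transporting through $R\mapsto R^\ast$ it suffices to show that a positive weakly null sequence $(R_n)$ in $\mathcal{L}^r(E;F^\ast)$ satisfies $\|R_n\|\to 0$. Each functional $R\mapsto\langle x^{\ast\ast},R^\ast\xi\rangle$ defining the weak operator topology is bounded, hence lies in $\mathcal{L}^r(E;F^\ast)^\ast$; thus $R_n^\ast\to 0$ in the weak operator topology, and the wot-PSP gives $\|R_n\|=\|R_n^\ast\|\to 0$. For $(3)\Rightarrow(1)$ I would embed $E^\ast$ and $F^\ast$ positively into $\mathcal{L}^r(E;F^\ast)$ by the rank-one maps $\varphi\mapsto\varphi\otimes y_0^\ast$ and $\psi\mapsto x_0^\ast\otimes\psi$, where $0\le y_0^\ast\in F^\ast$ and $0\le x_0^\ast\in E^\ast$ have norm one; each is a positive isometry onto its range, so Lemma \ref{nlemma} transfers the PSP from the tensor dual to $E^\ast$ and to $F^\ast$.

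The heart of the matter is $(1)\Rightarrow(2)$. Let $0\le S_n\in\mathcal{L}^r_{w^\ast}(F^{\ast\ast};E^\ast)$ with $S_n\to 0$ in the weak operator topology, and write $S_n=R_n^\ast$ with $0\le R_n\in\mathcal{L}^r(E;F^\ast)$, so $\|S_n\|=\|R_n\|$. \emph{Step one} upgrades wot-nullity to strong nullity via the PSP of $E^\ast$: for each fixed $\xi\in F^{\ast\ast}$ the wot condition says exactly that $S_n\xi\to 0$ in $\sigma(E^\ast,E^{\ast\ast})$, so for $\xi\ge 0$ the positive vectors $S_n\xi$ are norm null by the PSP of $E^\ast$, and splitting $\xi=\xi^+-\xi^-$ gives
$$\|S_n\xi\|\longrightarrow 0\qquad\text{for every }\xi\in F^{\ast\ast}.$$
\emph{Step two} derives a contradiction from the PSP of $F^\ast$. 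Suppose $\|S_n\|\ge\varepsilon$ along a subsequence. Since $S_n\ge 0$ one has $\|S_n\|=\sup\{\|S_n\xi\|:\xi\in(F^{\ast\ast})^+,\ \|\xi\|\le 1\}$, so there are $\xi_n\in(F^{\ast\ast})^+$ with $\|\xi_n\|\le 1$, $\|S_n\xi_n\|\ge\varepsilon$, and then $x_n\in E^+$ with $\|x_n\|\le 1$ and $\langle S_n\xi_n,x_n\rangle\ge\varepsilon/2$. Put $h_n:=R_nx_n\in(F^\ast)^+$. On one hand $\langle\xi_n,h_n\rangle=\langle R_n^\ast\xi_n,x_n\rangle=\langle S_n\xi_n,x_n\rangle\ge\varepsilon/2$, so $\|h_n\|\ge\varepsilon/2$. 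On the other hand, for every $\eta\in(F^{\ast\ast})^+$,
$$|\langle\eta,h_n\rangle|=|\langle R_n^\ast\eta,x_n\rangle|=|\langle S_n\eta,x_n\rangle|\le\|S_n\eta\|\,\|x_n\|\le\|S_n\eta\|\longrightarrow 0,$$
whence $\langle\eta,h_n\rangle\to 0$ for all $\eta\in F^{\ast\ast}$, i.e.\ $h_n\to 0$ in $\sigma(F^\ast,F^{\ast\ast})$. Thus $(h_n)$ is a positive weakly null sequence in $F^\ast$ bounded away from $0$ in norm, contradicting the PSP of $F^\ast$. Hence $\|S_n\|\to 0$ and $(2)$ holds.

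The main obstacle is precisely the coupling of the two varying witnesses $\xi_n$ and $x_n$ in Step two: raw wot-nullity controls $S_n$ only on fixed vectors, so neither PSP applies directly. The mechanism that resolves this is that Step one, at the cost of the PSP of $E^\ast$, produces genuine norm convergence $\|S_n\eta\|\to 0$ for \emph{all} positive $\eta\in F^{\ast\ast}$ — not merely for $\eta$ of the form $\hat y$ — and this uniformity is exactly what absorbs the unit vectors $x_n$ in the estimate for $\langle\eta,h_n\rangle$, rendering the witness sequence $(h_n)$ weakly null rather than merely weak* null (the failure of the latter to suffice is visible already for $\ell_1$, where coordinatewise-null positive sequences need not be norm null). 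The only remaining points I would verify carefully are that the adjoint correspondence is bipositive and norm-preserving, that a positive weak*-weak* continuous operator indeed has a positive pre-adjoint, and that the regular norm of the rank-one embeddings equals the product of the norms.
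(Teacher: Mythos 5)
Your proposal is correct and follows essentially the same route as the paper: the same cycle $(1)\Rightarrow(2)\Rightarrow(3)\Rightarrow(1)$, with $(1)\Rightarrow(2)$ argued by contradiction through the identical two-step mechanism (PSP of $E^\ast$ upgrading wot-nullity to $\|S_n\xi\|\to 0$ for fixed $\xi$, then a positive weakly null witness sequence $R_nx_n$ in $F^\ast$ bounded away from zero contradicting the PSP of $F^\ast$), $(2)\Rightarrow(3)$ by observing the wot functionals live in the bidual of $\mathcal{L}^r(E;F^\ast)\cong(E\hat{\otimes}_{|\pi|}F)^\ast$, and $(3)\Rightarrow(1)$ by the rank-one positive embeddings plus Lemma \ref{nlemma}. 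The only cosmetic difference is that you select the witness $\xi_n$ before $x_n$ rather than picking $x_n\in B_{E^+}$ with $\|T_nx_n\|\ge\alpha$ directly; the argument is otherwise the paper's.
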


\begin{proof} $ (1)\Rightarrow (2)$ Let $(S_n)_n\subseteq$ $\mathcal{L}^r_{w^\ast}(F^{\ast\ast};E^\ast)$ be a sequence of positive weak$^\ast$-to-weak$^\ast$ continuous operators such that $S_n \longrightarrow 0$ in the weak operator topology. Then, for every $n \in \mathbb{N}$ there exists $T_n \in \mathcal{L}^r(E;F^\ast)$ such that $S_n=T_n^\ast$, and $\langle x^{\ast\ast}, S_n( y^{\ast\ast}) \rangle \longrightarrow 0$ for all $x^{\ast\ast} \in E^{\ast\ast}$, $ y^{\ast\ast} \in F^{\ast\ast}$. Note that $\lVert S_n\rVert = \lVert T_n^\ast\rVert=\lVert T_n \rVert$ for every $n$.

Suppose that $(S_n)_n$ is not norm null. In this case there exists $\alpha > 0$  such that (up to a subsequence if necessary) $\lVert S_n\rVert=\lVert T_n \rVert> \alpha$ for every $n$. Then, for  every $n \in \mathbb{N}$ there exists   $x_n \in B_{E^+}$ such that $\lVert T_n( x_n)\rVert \geq \alpha$. For all $x^{\ast\ast} \in E^{\ast\ast}$ and $y^{\ast\ast} \in F^{\ast\ast}$,
$$\langle x^{\ast\ast}, S_n (y^{\ast\ast})\rangle=\langle x^{\ast\ast}, T_n^\ast (y^{\ast\ast})\rangle \longrightarrow 0,$$
which implies that $T_n^\ast (y^{\ast\ast})\overset{w}{\longrightarrow} 0$ in $E^\ast$. By the PSP of $E^\ast$ we have
$$\lVert T_n^\ast (y^{\ast\ast})\rVert = \lVert T_n^\ast ((y^{\ast\ast})^+ - (y^{\ast\ast})^-) \rVert \leq \lVert T_n^\ast y^{\ast\ast+}\rVert+\lVert T_n^\ast y^{\ast\ast-}\rVert\longrightarrow 0.$$
Thus, for every $y^{\ast\ast} \in F^{\ast\ast}$,
$$
|\langle y^{\ast\ast}, T_n (x_n)\rangle| = |\langle T_n^\ast( y^{\ast\ast}), x_n\rangle|\leq
\lVert T_n^\ast( y^{\ast\ast})\rVert \cdot \lVert x_n\rVert\longrightarrow 0.
$$
It follows that $T_n x_n\overset{w}{\longrightarrow} 0$ in $F^\ast$, from which the PSP of $F^\ast$ yields that $\lVert T_n x_n\rVert\longrightarrow 0$. This is a contradiction because $\lVert T_n x_n\rVert\geq \alpha$ for every $n\in \N$. Therefore, $\|S_n\| \longrightarrow 0$, proving that $\mathcal{L}^r_{w^\ast}(F^{\ast\ast};E^\ast)$ has the wot-PSP.

\vspace{.1 in}

$(2)\Rightarrow (3)$ The assumption is that $\mathcal{L}^r_{w^\ast}(F^{\ast\ast};E^\ast)$ has the wot-PSP. Take $(\phi_n)_n\subseteq (E\hat{\otimes}_{|\pi|}F)^*$ a sequence of positive functionals such that $(\phi_n)$  is weakly convergent to zero. We have already used that $(E\hat{\otimes}_{|\pi|}F)^\ast$, ${\mathcal L}^r(E,F, \mathbb{R})$ and $\mathcal{L}^r(E;F^\ast)$ are isometrically isomorphic and lattice homomorphic.   Let $(T_n)_n$ be the sequence of the corresponding operators for $(\phi_n)$, that is, each $T_n \in \mathcal{L}^r(E;F^\ast)$ and
$$\langle T_n( x), y\rangle = \phi_n(x\otimes y) {\rm ~for~all~} x \in E {\rm ~and~} y \in F.$$
Given $x^{\ast\ast} \in E^{\ast\ast}$ and  $y^{\ast\ast} \in F^{\ast\ast}$, define $$\xi_{x^{\ast\ast}, y^{\ast\ast}} \colon (E\hat{\otimes}_{|\pi|} F)^{\ast}\longrightarrow \mathbb{R}~,~
\xi_{x^{\ast\ast}, y^{\ast\ast}}(\phi) = \langle T_\phi^{\ast\ast}(x^{\ast\ast}), y^{\ast\ast}\rangle.$$
It is easy to check that $\xi_{x^{\ast\ast}, y^{\ast\ast}} \in (E\hat{\otimes}_{|\pi|} F)^{\ast\ast}$.
As $0\leq\phi_n\overset{w}{\longrightarrow} 0$ in $(E\hat{\otimes}_{|\pi|}F)^\ast$, we have
$$
\xi_{x^{\ast\ast}, y^{\ast\ast}}(\phi_n) = \langle T_{\phi_n}^{\ast\ast} (x^{\ast\ast}), y^{\ast\ast} \rangle = \langle x^{\ast\ast}, T_{\phi_n}^\ast (y^{\ast\ast} )\rangle\longrightarrow 0,
$$
for all $x^{\ast\ast} \in E^{\ast\ast}$ and $y^{\ast\ast} \in F^{\ast\ast}$.
This means that $\left(T_{\phi_n}^\ast\right)_n$ is convergent to zero in weak operator topology in $\mathcal{L}^r_{w^\ast}(F^{\ast\ast}; E^\ast)$. The wot-PSP of the space $\mathcal{L}^r_{w^\ast}(F^{\ast\ast}; E^\ast)$ gives $\|T_{\phi_n}^\ast\|\longrightarrow 0$. Since
$$\|T_{\phi_n}^\ast\| = \|T_{\phi_n}\| = \|\;|T_{\phi_n}|\;\| = \|T_{\phi_n}\|_r = \|\phi_n\|$$
for every $n$, we conclude that $\|\phi_n\|\longrightarrow 0$, which proves that $(E\hat{\otimes}_{|\pi|} F)^\ast$ has the PSP.

\medskip

$ (3)\Rightarrow (1)$ Fixed $0 \neq y^* \in (F^*)^+$, it is clear that $E^*$ is positively isometric to a subspace of ${\mathcal L}^r(E,F;\mathbb{R})$ via the embedding
$$x^* \in E^* \mapsto x^* \otimes y^* \in  {\mathcal L}^r(E,F;\mathbb{R})~,~(x^* \otimes y^*)(x,y) = x^*(x)y^*(y).$$
The same holds for $F^*$. Since ${\mathcal L}^r(E,F;\mathbb{R})$ is isometrically isomorphic and lattice homomorphic to $(E\hat{\otimes}_{|\pi|}F)^\ast$ (Theorem \ref{basic}), the implication follows Lemma \ref{nlemma}.
%
\end{proof}


\section{Spaces of regular multilinear operators and homogeneous polynomials}\label{section 4}

In the Banach space setting, Ryan proved that the space of bounded linear operators from the Banach space $E$ to the Banach space $F$ has the Schur property if and only if the dual $E^*$ of $E$ and $F$ have the Schur property \cite[Theorem 3.3]{R3}. The extension of this result to the multilinear and polynomial cases was done in \cite{pams}.

The lattice counterpart of Ryan's linear result was settled by Wnuk:
\begin{theorem} \cite[Theorem 3]{W1}\label{wnuk}
Let $E$ and $F$ be Banach lattices with $F$ Dedekind complete. The space $\mathcal{L}^r(E;F)$ of regular operators from $E$ to $F$ has the positive Schur property if and only if $E^\ast$ and $F$ have the positive Schur property.
\end{theorem}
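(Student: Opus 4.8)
The plan is to prove both implications by reducing everything to the defining property of the PSP, using the rank-one operators $x^*\otimes y$, given by $(x^*\otimes y)(x)=x^*(x)\,y$, as the bridge between $\mathcal{L}^r(E;F)$ and its factors $E^*$ and $F$.

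For the necessity of the PSP on $E^*$ and $F$, I would fix $0\neq x_0^*\in (E^*)^+$ and $0\neq y_0\in F^+$ and check that the maps $y\mapsto x_0^*\otimes y$ and $x^*\mapsto x^*\otimes y_0$ are positive linear embeddings of $F$ and $E^*$, respectively, into $\mathcal{L}^r(E;F)$. Positivity is immediate, and the identities $|x_0^*\otimes y|=x_0^*\otimes|y|$ and $|x^*\otimes y_0|=|x^*|\otimes y_0$ (valid because $x_0^*,y_0\geq 0$ and $F$ is Dedekind complete) give $\|x_0^*\otimes y\|_r=\|x_0^*\|\,\|y\|$ and $\|x^*\otimes y_0\|_r=\|x^*\|\,\|y_0\|$, so both maps are positive isomorphisms onto their ranges. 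Since $\mathcal{L}^r(E;F)$ has the PSP, Lemma \ref{nlemma} then transfers the PSP to both $E^*$ and $F$.

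For the sufficiency, I would argue by contradiction. Let $(T_n)_n\subseteq \mathcal{L}^r(E;F)^+$ be weakly null; since $T_n\geq 0$ we have $\|T_n\|_r=\|T_n\|=\sup_{x\in B_{E^+}}\|T_nx\|$. If $\|T_n\|_r\not\to 0$, then, passing to a subsequence, there are $\alpha>0$ and $x_n\in B_{E^+}$ with $\|T_nx_n\|\geq\alpha$, and each $T_nx_n$ is positive. The key observation is that, for every $x^{**}\in E^{**}$ and $y^*\in F^*$, the functional $\Phi_{x^{**},y^*}(T):=\langle x^{**},T^*y^*\rangle=\langle T^{**}x^{**},y^*\rangle$ belongs to $\mathcal{L}^r(E;F)^*$, since $|\Phi_{x^{**},y^*}(T)|\leq \|x^{**}\|\,\|y^*\|\,\|T\|_r$. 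Evaluating the weak nullity of $(T_n)_n$ on these functionals gives $\langle x^{**},T_n^*y^*\rangle\to 0$ for every $x^{**}\in E^{**}$, that is, $T_n^*y^*\overset{w}{\longrightarrow}0$ in $E^*$ for each $y^*\in (F^*)^+$; as $T_n^*y^*\geq 0$, the PSP of $E^*$ forces $\|T_n^*y^*\|\to 0$.

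Finally I would feed this back into $F$. For $y^*\in (F^*)^+$ one has $0\leq \langle y^*,T_nx_n\rangle=\langle T_n^*y^*,x_n\rangle\leq \|T_n^*y^*\|\,\|x_n\|\leq \|T_n^*y^*\|\to 0$, and splitting an arbitrary $y^*$ into positive and negative parts shows $T_nx_n\overset{w}{\longrightarrow}0$ in $F$. Since $(T_nx_n)_n$ is a positive weakly null sequence, the PSP of $F$ gives $\|T_nx_n\|\to 0$, contradicting $\|T_nx_n\|\geq\alpha$. I expect this reverse direction to be the main obstacle: the delicate point is that one must exploit the full weak topology of $\mathcal{L}^r(E;F)$ — not merely the weak operator topology — to obtain genuine weak (rather than weak$^*$) nullity of $(T_n^*y^*)_n$ in $E^*$, which is precisely what the functionals $\Phi_{x^{**},y^*}$ supply; the subsequent passage through the varying vectors $x_n$ causes no trouble only because they remain uniformly bounded.
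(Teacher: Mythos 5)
Your proof is correct, but note that the paper does not prove this statement at all: it is quoted as a known result of Wnuk (\cite[Theorem 3]{W1}), so there is no internal proof to compare against. That said, your argument is sound and is built from exactly the ingredients the authors deploy elsewhere in the paper. The necessity half, via the rank-one embeddings $y\mapsto x_0^*\otimes y$ and $x^*\mapsto x^*\otimes y_0$ together with Lemma \ref{nlemma}, is the same device used in the converse direction of Theorem \ref{bilinear}; your norm computation $\|x_0^*\otimes y\|_r=\|x_0^*\|\,\|y\|$ is right, and in fact $|x_0^*\otimes y|=x_0^*\otimes|y|$ can be checked directly without Dedekind completeness, which is anyway part of the hypothesis. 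The sufficiency half is structurally identical to the proof of $(1)\Rightarrow(2)$ of Theorem \ref{wot}: pass to a subsequence with $\|T_n\|=\sup_{x\in B_{E^+}}\|T_nx\|>\alpha$, use the functionals $\Phi_{x^{**},y^*}(T)=\langle x^{**},T^*y^*\rangle$ (bounded since $\|T\|\le\|T\|_r$) to convert weak nullity of $(T_n)_n$ into genuine weak nullity of the positive sequences $(T_n^*y^*)_n$ in $E^*$, apply the PSP of $E^*$, and then the uniform bound on $(x_n)_n$ to get $T_nx_n\overset{w}{\longrightarrow}0$ in $F$ and a contradiction from the PSP of $F$. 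Your closing remark is also on point: testing only against the weak operator topology would yield merely weak$^*$ nullity of $T_n^*y^*$, which is useless for the PSP of $E^*$, and the functionals $\Phi_{x^{**},y^*}$ are precisely what upgrades this to weak nullity.
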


In this section we extend Wnuk's result to the multilinear and polynomial cases, that is, we characterize the positive Schur property on the Banach lattices ${\mathcal L}^r(E_1, \ldots, E_m;F)$ of regular $m$-linear operators from $E_1 \times \cdots \times E_m$ to $F$ and ${\mathcal P}^r(^mE;F)$ of regular $m$-homogeneous polynomials from $E$ to $F$, where $F$ is Dedekind complete. We end up with characterizations similar to \cite[Proposition 4.3]{pams}, for example, in Theorem \ref{main} we prove that ${\mathcal L}^r(E_1, \ldots, E_m;F)$ has the PSP if and only if $E_1^*, \ldots, E_m^*$ and $F$ have the PSP.

We start by characterizing the positive Schur property in spaces of regular bilinear operators.

\begin{theorem}\label{bilinear}
Let $E_1, E_2, F$ be Banach lattices with $F$ Dedekind complete.
Then, the Banach lattice of regular bilinear operators $\mathcal{L}^r(E_1, E_2;F)$ has the PSP if and only if $E_1^\ast, E_2^\ast, F$ have the PSP.
\end{theorem}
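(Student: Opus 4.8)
The plan is to reduce this bilinear statement to the linear case (Wnuk's Theorem \ref{wnuk}) by means of the linearization isomorphism of Theorem \ref{basic}, and then to absorb the surviving tensor-product hypothesis using the equivalence of Theorem \ref{wot}. First I would apply Theorem \ref{basic} with $m=2$: since $F$ is Dedekind complete, the correspondence $T \mapsto T^\otimes$ is an isometric, bi-positive lattice isomorphism between the Banach lattices $\mathcal{L}^r(E_1, E_2; F)$ and $\mathcal{L}^r(E_1 \hat{\otimes}_{|\pi|} E_2; F)$. The PSP passes through such an identification in both directions, because a lattice isometric isomorphism carries positive weakly null sequences to positive weakly null sequences and preserves norms (equivalently, one applies Lemma \ref{nlemma} to the isomorphism and to its inverse). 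Hence $\mathcal{L}^r(E_1, E_2; F)$ has the PSP if and only if $\mathcal{L}^r(E_1 \hat{\otimes}_{|\pi|} E_2; F)$ does.

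Next, since $E_1 \hat{\otimes}_{|\pi|} E_2$ is a Banach lattice and $F$ is Dedekind complete, Wnuk's Theorem \ref{wnuk} applies to $\mathcal{L}^r(E_1 \hat{\otimes}_{|\pi|} E_2; F)$ and shows that this space has the PSP if and only if both $(E_1 \hat{\otimes}_{|\pi|} E_2)^*$ and $F$ have the PSP. Finally I would close the loop with the equivalence $(1)\Leftrightarrow(3)$ of Theorem \ref{wot}, read with $E_1$ and $E_2$ in the roles of $E$ and $F$: it states precisely that $(E_1 \hat{\otimes}_{|\pi|} E_2)^*$ has the PSP if and only if $E_1^*$ and $E_2^*$ have the PSP. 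Concatenating the three equivalences yields that $\mathcal{L}^r(E_1, E_2; F)$ has the PSP if and only if $E_1^*$, $E_2^*$ and $F$ have the PSP, which is the assertion.

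No single step is analytically hard: every arrow in the chain is a genuine equivalence, so both implications of the theorem fall out simultaneously, and the argument is essentially an assembly of three already-established results. The only point requiring care is the first step, where one must be certain that the identification of Theorem \ref{basic} transports the PSP faithfully; this is safe precisely because that identification is simultaneously isometric, bi-positive, and (by Dedekind completeness of $F$) lattice homomorphic, so the order and norm data that govern the PSP are preserved exactly.
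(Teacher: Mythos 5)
Your proof is correct: each of the three equivalences you chain together is genuinely available at this point in the paper (Theorem \ref{wot} is proved in Section \ref{section 3} without reference to this result, so there is no circularity), and concatenating them does give both directions at once. Your route differs from the paper's in two respects. For the forward direction, the paper obtains the PSP of $(E_1\hat{\otimes}_{|\pi|}E_2)^*$ not from Theorem \ref{wot} but from a first application of Wnuk's Theorem \ref{wnuk} to $\mathcal{L}^r(E_1;E_2^*)$ (legitimate because $E_2^*$, being a dual, is Dedekind complete) together with the lattice isometry $(E_1\hat{\otimes}_{|\pi|}E_2)^*\cong\mathcal{L}^r(E_1;E_2^*)$; this keeps the argument entirely inside the ``Wnuk plus linearization'' toolkit and does not need the wot-PSP machinery. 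For the converse, the paper explicitly declines to reverse the chain (remarking that one could apply Theorem \ref{wnuk} twice) and instead gives an elementary argument: it embeds $E_1^*$, $E_2^*$ and $F$ positively into $\mathcal{L}^r(E_1,E_2;F)$ via rank-one-type operators $(x_1,x_2)\mapsto\varphi_1(x_1)\varphi_2(x_2)y$ and invokes Lemma \ref{nlemma}. Your version is shorter and is essentially the $m=2$ instance of the paper's own proof of the general multilinear Theorem \ref{main}; the paper's version buys a self-contained converse that uses only Lemma \ref{nlemma} and exhibits the relevant subspaces explicitly. Either way the statement is fully established.
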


\begin{proof}
Suppose $E_1^\ast, E_2^\ast, F$ have the PSP. Theorem \ref{wnuk} gives that $\mathcal{L}^r(E_1,E_2^\ast)$ has the PSP. Since $(E_1\hat{\otimes}_{|\pi|} E_2)^{\ast} = \mathcal{L}^r(E_1,E_2^\ast)$ lattice isometrically, 
it follows that $(E_1\hat{\otimes}_{|\pi|} E_2)^{\ast}$ has the PSP.
Now that we know that $(E_1\hat{\otimes}_{|\pi|} E_2)^{\ast}$ and $F$ have the PSP and $F$ is Dedekind complete, Theorem \ref{wnuk} gives that  $\mathcal{L}^r(E_1\hat{\otimes}_{|\pi|}E_2;F)$ has the PSP, hence $\mathcal{L}^r(E_1,E_2;F)$ has the PSP by Theorem \ref{basic}.

The converse may be proved applying Theorem \ref{wnuk} twice, but we'd rather give an elementary reasoning. Suppose that $\mathcal{L}^r(E_1, E_2;F)$ has the PSP. Fix $0 \neq \varphi_2 \in (E_2^*)^+$ and $0 \neq y \in F^+$. It is easy to see that $E_1^*$ is positively isomorphic to a subspace of $\mathcal{L}^r(E_1, E_2;F)$ via the operator
$$T \colon E_1^* \longrightarrow \mathcal{L}^r(E_1, E_2;F)~,~T(\varphi_1)(x_1, x_2) = \varphi_1(x_1)\varphi_2(x_2)y. $$
By Lemma \ref{nlemma}, $E_1^*$ has the PSP and, analogously, $E_2^*$ has the PSP as well. Fix $0 \neq \varphi_j \in (E_j^*)^+, j = 1,2,$ and note that $F$ is  positively isomorphic to a subspace of $\mathcal{L}^r(E_1, E_2;F)$ via the operator
$$T \colon F \longrightarrow \mathcal{L}^r(E_1, E_2;F)~,~T(y)(x_1, x_2) = \varphi_1(x_1)\varphi_2(x_2)y. $$
The PSP of $F$ follows from Lemma \ref{nlemma}.
\end{proof}

Now we turn to the multilinear case.

\begin{proposition}\label{tthh}
For Banach lattices $E_1, \ldots, E_m$,  $(E_1\hat{\otimes}_{|\pi|}\cdots \hat{\otimes}_{|\pi|} E_m)^\ast$ has the PSP if and only if $E_1^\ast,  \ldots, E_m^\ast$ have the PSP.
\end{proposition}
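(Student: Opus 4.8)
The plan is to argue by induction on the number $m$ of factors, reducing at each step to the bilinear case already available in Theorem \ref{bilinear}. The key observation for the base case is that Theorem \ref{bilinear} applied with $F = \mathbb{R}$ gives exactly the desired equivalence for two factors: since $\mathbb{R}$ is Dedekind complete and trivially enjoys the PSP, and since $\mathcal{L}^r(E_1, E_2; \mathbb{R})$ is lattice isometric to $(E_1\hat{\otimes}_{|\pi|} E_2)^\ast$ by Theorem \ref{basic}, that result yields that $(E_1\hat{\otimes}_{|\pi|} E_2)^\ast$ has the PSP if and only if $E_1^\ast$ and $E_2^\ast$ have the PSP. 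The case $m=1$ is a tautology.

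For the inductive step, I would set $G := E_1\hat{\otimes}_{|\pi|}\cdots \hat{\otimes}_{|\pi|} E_{m-1}$ and invoke the associativity of the positive projective tensor product to obtain a lattice isometric isomorphism
$$
E_1\hat{\otimes}_{|\pi|}\cdots \hat{\otimes}_{|\pi|} E_m \cong G \,\hat{\otimes}_{|\pi|}\, E_m.
$$
Passing to duals and applying the two-factor case to the pair $G, E_m$, the space $(E_1\hat{\otimes}_{|\pi|}\cdots \hat{\otimes}_{|\pi|} E_m)^\ast$ has the PSP if and only if both $G^\ast$ and $E_m^\ast$ have the PSP. By the induction hypothesis, $G^\ast = (E_1\hat{\otimes}_{|\pi|}\cdots \hat{\otimes}_{|\pi|} E_{m-1})^\ast$ has the PSP if and only if $E_1^\ast, \ldots, E_{m-1}^\ast$ have the PSP. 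Chaining these two equivalences closes the induction.

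The step I expect to be the main obstacle is justifying the displayed isomorphism as a lattice isometry, rather than as a mere topological or linear isomorphism, since the bilinear case is sensitive to both the norm and the lattice structure. This associativity is a known feature of the Fremlin construction, so I would cite \cite{fremlin} (or \cite{BuBuskes}) and check that the identification is compatible with the duality used to pass from tensor products to spaces of regular operators. As a sanity check, and to make the ``only if'' direction independent of associativity, I would also record the direct embedding argument: fixing $0 \neq \varphi_j \in (E_j^\ast)^+$ for each $j \neq i$, the assignment $\varphi_i \mapsto \varphi_1 \otimes \cdots \otimes \varphi_m$ positively embeds $E_i^\ast$ onto a subspace of $\mathcal{L}^r(E_1, \ldots, E_m; \mathbb{R}) = (E_1\hat{\otimes}_{|\pi|}\cdots \hat{\otimes}_{|\pi|} E_m)^\ast$, so that Lemma \ref{nlemma} immediately transfers the PSP from the dual tensor product to each $E_i^\ast$; only the ``if'' direction then genuinely relies on the inductive associativity argument.
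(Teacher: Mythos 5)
Your proof is correct, and the forward (``if'') direction is essentially the paper's argument: induction on $m$ via the associativity of the positive projective tensor product (\cite[Corollary 1G]{fremlin}) combined with the two-factor dual statement --- which you extract from Theorem \ref{bilinear} with $F=\mathbb{R}$, while the paper quotes it as the equivalence $(1)\Leftrightarrow(3)$ of Theorem \ref{wot}; these are the same fact, and neither reference creates a circularity since both results precede Proposition \ref{tthh} and do not depend on it. Where you genuinely diverge is the converse. You observe that since the two-factor statement and the induction hypothesis are both biconditionals, the induction already delivers the ``only if'' direction for free, and you offer as backup the direct positive embedding $\varphi_i \mapsto \varphi_1\otimes\cdots\otimes\varphi_m$ of $E_i^\ast$ into $\mathcal{L}^r(E_1,\ldots,E_m;\mathbb{R}) = (E_1\hat{\otimes}_{|\pi|}\cdots\hat{\otimes}_{|\pi|}E_m)^\ast$ together with Lemma \ref{nlemma}. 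The paper instead runs a separate argument: it identifies $(E_1\hat{\otimes}_{|\pi|}\cdots\hat{\otimes}_{|\pi|}E_m)^\ast$ with $\mathcal{L}^r(E_1;(E_2\hat{\otimes}_{|\pi|}\cdots\hat{\otimes}_{|\pi|}E_m)^\ast)$ and iterates Wnuk's Theorem \ref{wnuk} to peel off $E_1^\ast, E_2^\ast,\ldots$ one at a time. Your biconditional induction is the more economical of the two (one induction instead of an induction plus a separate peeling argument), and your embedding remark makes the ``only if'' direction independent of the associativity isomorphism, which is the one external ingredient whose lattice-isometric nature you rightly flag as the point requiring care; both of your routes are sound.
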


\begin{proof}
First we assume that $E_1^\ast, E_2^\ast, \ldots, E_m^\ast$ have the PSP. By Theorem \ref{wot}, $(E_1\hat{\otimes}_{|\pi|} E_2)^\ast$ has the PSP. Now suppose that  $(E_1\hat{\otimes}_{|\pi|}\cdots\hat{\otimes}_{|\pi|} E_n)^\ast$ has the PSP for some $n < m$. Since $(E_1\hat{\otimes}_{|\pi|}\cdots\hat{\otimes}_{|\pi|} E_n)^\ast$ and $E_{n+1}^\ast$ have the PSP, calling on Theorem \ref{wot} once again and using the associativity of the positive project tensor product \cite[Corollary 1G]{fremlin}, we conclude that $$(E_1\hat{\otimes}_{|\pi|}\cdots\hat{\otimes}_{|\pi|} E_n \hat{\otimes}_{|\pi|} E_{n+1})^\ast = \left((E_1\hat{\otimes}_{|\pi|}\cdots\hat{\otimes}_{|\pi|} E_n) \hat{\otimes}_{|\pi|} E_{n+1}\right)^\ast $$ has the PSP.

 Conversely, assume that $(E_1\hat{\otimes}_{|\pi|}\cdots \hat{\otimes}_{|\pi|} E_m)^\ast$ has the PSP. Up to isometric isomorphisms and lattice homomorphisms,
\begin{align*} {\mathcal L}^r(E_1; (E_2\hat{\otimes}_{|\pi|}\cdots \hat{\otimes}_{|\pi|} E_m)^\ast)& = {\mathcal L}^r (E_1; {\mathcal L}^r(E_2, \ldots, E_m;\mathbb{R})) = {\mathcal L}^r (E_1,E_2, \ldots, E_m;\mathbb{R})\\
& = (E_1\hat{\otimes}_{|\pi|}\cdots \hat{\otimes}_{|\pi|} E_m)^\ast,
\end{align*}
so, ${\mathcal L}^r(E_1; (E_2\hat{\otimes}_{|\pi|}\cdots \hat{\otimes}_{|\pi|} E_m)^\ast)$ has the PSP. By Theorem \ref{wnuk}, $E_1^*$ and $(E_2\hat{\otimes}_{|\pi|}\cdots \hat{\otimes}_{|\pi|} E_m)^\ast$ have the PSP. Starting now with $(E_2\hat{\otimes}_{|\pi|}\cdots \hat{\otimes}_{|\pi|} E_m)^\ast$, a repetition of this reasoning gives that $E_2^*$ has the PSP, and so on. 
\end{proof}

Next we have the multilinear version of Theorem \ref{wnuk}.


\begin{theorem}\label{main}
Let $E_1, \dots, E_m, F$ be Banach lattices with F Dedekind complete. Then $\mathcal{L}^r(E_1,\dots,E_m;F)$ has the PSP if and only if $E_1^\ast, \dots, E_m^\ast$ and $F$ have
the PSP.
\end{theorem}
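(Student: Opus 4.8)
The plan is to reduce the multilinear statement to Wnuk's linear Theorem \ref{wnuk} by combining the linearization machinery of Theorem \ref{basic} with the tensor-dual characterization of Proposition \ref{tthh}, following the same template as the bilinear case in Theorem \ref{bilinear}. For the forward implication I would assume that $E_1^\ast, \dots, E_m^\ast$ and $F$ have the PSP and argue in three steps. First, Proposition \ref{tthh} immediately gives that $(E_1\hat{\otimes}_{|\pi|}\cdots \hat{\otimes}_{|\pi|} E_m)^\ast$ has the PSP. Second, since this dual lattice and $F$ both have the PSP and $F$ is Dedekind complete, Wnuk's Theorem \ref{wnuk} applied to the single Banach lattice $E_1\hat{\otimes}_{|\pi|}\cdots \hat{\otimes}_{|\pi|} E_m$ yields that ${\mathcal L}^r(E_1\hat{\otimes}_{|\pi|}\cdots \hat{\otimes}_{|\pi|} E_m;F)$ has the PSP. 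Third, because $F$ is Dedekind complete, Theorem \ref{basic} supplies a lattice isometric isomorphism between that space and ${\mathcal L}^r(E_1, \dots, E_m;F)$, so the latter inherits the PSP.

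For the converse I would assume that ${\mathcal L}^r(E_1, \dots, E_m;F)$ has the PSP and exhibit each factor as positively isomorphic to a subspace, invoking Lemma \ref{nlemma}. Fixing, for a given index $i$, nonzero positive functionals $\varphi_j \in (E_j^\ast)^+$ for each $j\neq i$ and a nonzero $y \in F^+$, I would define the embedding
$$
S_i \colon E_i^\ast \longrightarrow {\mathcal L}^r(E_1, \dots, E_m;F)~,~S_i(\psi)(x_1, \dots, x_m) = \psi(x_i)\Big(\textstyle\prod_{j\neq i}\varphi_j(x_j)\Big)\,y.
$$
One checks that $S_i$ is positive and a topological isomorphism onto its range, since $\|S_i(\psi)\|_r$ is comparable to $\|\psi\|$ with comparison constant $\prod_{j\neq i}\|\varphi_j\|\cdot \|y\|$, and positivity is clear because $S_i(\psi)$ is positive whenever $\psi \geq 0$. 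Lemma \ref{nlemma} then gives that $E_i^\ast$ has the PSP, and letting $i$ range over $1, \dots, m$ disposes of all the $E_i^\ast$. For $F$, an entirely analogous embedding $y\in F \mapsto \big((x_1,\dots,x_m)\mapsto \prod_{j}\varphi_j(x_j)\,y\big)$, with fixed positive functionals $\varphi_j\in (E_j^\ast)^+$ on every factor, is a positive isomorphism onto its range, and Lemma \ref{nlemma} delivers the PSP of $F$.

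I expect essentially no serious obstacle here: the genuine content has already been isolated in Proposition \ref{tthh} (which rests on the associativity of the positive projective tensor product) and in Wnuk's theorem, so what remains is bookkeeping. The only point requiring a line of verification is that the embeddings $S_i$ and the $F$-embedding are bicontinuous onto their ranges and positive, which is routine.
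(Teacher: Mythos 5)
Your forward implication is exactly the paper's argument: Proposition \ref{tthh} gives the PSP of $(E_1\hat{\otimes}_{|\pi|}\cdots \hat{\otimes}_{|\pi|} E_m)^\ast$, Theorem \ref{wnuk} transfers it to $\mathcal{L}^r(E_1\hat{\otimes}_{|\pi|}\cdots \hat{\otimes}_{|\pi|} E_m;F)$, and Theorem \ref{basic} identifies that space with $\mathcal{L}^r(E_1,\dots,E_m;F)$ as Banach lattices. Where you diverge is the converse. The paper simply observes that all three ingredients are themselves equivalences --- Proposition \ref{tthh} is an ``if and only if,'' Wnuk's theorem is an ``if and only if,'' and Theorem \ref{basic} is a lattice isometry --- so the whole proof collapses into a single chain of biconditionals and no separate converse argument is needed. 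You instead re-prove the converse by hand, embedding each $E_i^\ast$ and $F$ positively into $\mathcal{L}^r(E_1,\dots,E_m;F)$ via the rank-one-type operators $S_i(\psi)(x_1,\dots,x_m)=\psi(x_i)\prod_{j\neq i}\varphi_j(x_j)\,y$ and invoking Lemma \ref{nlemma}. This is correct (the computation $\|S_i(\psi)\|_r=\|\psi\|\prod_{j\neq i}\|\varphi_j\|\,\|y\|$ goes through because the fixed $\varphi_j$ and $y$ are positive, so $|S_i(\psi)|=S_i(|\psi|)$), and it is precisely the ``elementary reasoning'' the paper chooses to present for the bilinear case in Theorem \ref{bilinear} before abandoning it in the general case. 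What your route buys is independence from the converse halves of Proposition \ref{tthh} and Theorem \ref{wnuk}, and it works without the Dedekind completeness of $F$ for that direction; what it costs is a page of routine verification that the paper's two-line chain of equivalences avoids.
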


\begin{proof} We just have to combine some of the previous results:
\begin{align*} E_1^\ast, \dots, E_m^\ast {\rm ~and~}F {\rm ~have~the~PSP} & \Longleftrightarrow (E_1\hat{\otimes}_{|\pi|}\cdots \hat{\otimes}_{|\pi|} E_m)^\ast {\rm ~and~}F {\rm ~have~the~PSP}\\
& \Longleftrightarrow \mathcal{L}^r(E_1\hat{\otimes}_{|\pi|}\cdots \hat{\otimes}_{|\pi|} E_m;F) {\rm ~has~the~PSP}\\
& \Longleftrightarrow \mathcal{L}^r(E_1,\ldots, E_m; F) {\rm ~has~the~PSP},
\end{align*}
where the first equivalence follows from Proposition \ref{tthh}, the second from Theorem \ref{wnuk} and the third from Theorem \ref{basic}.
\end{proof}

By $\mathcal{P}^r(^mE; F)$ we denote the Banach lattice of regular continuous $m$-homogeneous polynomials from $E$ to $F$ (see \cite{BuBuskes}). We write $\mathcal{L}^r(^mE;F)$, $\mathcal{L}^r(^mE)$  and $\mathcal{P}^r(^mE)$ instead of $\mathcal{L}^r(E, \stackrel{(m)}{\ldots},E;F)$, $\mathcal{L}^r(^mE;\mathbb{R})$ and $\mathcal{P}^r(^mE;\mathbb{R})$, respectively.

\begin{theorem}
Let $E$ and $F$ be Banach lattices with $F$ Dedekind complete. Then the following are equivalent.
\begin{itemize}
    \item [(1)] $E^\ast$ and $F$ have the PSP.
    \item[(2)] $\mathcal{L}^r(E;F)$ has the PSP.
    \item[(3)] $\mathcal{L}^r(^mE;F)$ has the PSP for every $ m \in \mathbb{N}$.
    \item[(4)] $\mathcal{L}^r(^mE;F)$ has the PSP for some $ m \in \mathbb{N}$.
    \item[(5)] $\mathcal{P}^r(^mE;F)$ has the PSP for every $ m \in \mathbb{N}$.
    \item[(6)] $\mathcal{P}^r(^mE;F)$ has the PSP for some $ m \in \mathbb{N}$.
    \item[(7)] $\mathcal{L}^r(^mE)$ has the PSP for every $ m \in \mathbb{N}$ and $F$ has the PSP.
    \item[(8)] $\mathcal{L}^r(^mE)$ has the PSP for some $ m \in \mathbb{N}$ and $F$ has the PSP.
    \item[(9)] $\mathcal{P}^r(^mE)$ has the PSP for every $ m \in \mathbb{N}$ and $F$ has the PSP.
    \item[(10)] $\mathcal{P}^r(^mE)$ has the PSP for some $ m \in \mathbb{N}$ and $F$ has the PSP.
\end{itemize}
\end{theorem}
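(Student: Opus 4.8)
The plan is to prove that each of the ten conditions is equivalent to (1). The multilinear conditions require no new ideas: applying Theorem \ref{main} with $E_1=\cdots=E_m=E$ shows that, for every fixed $m$, $\mathcal L^r(^mE;F)$ has the PSP if and only if $E^\ast$ and $F$ have the PSP, which is exactly (1); this settles $(1)\Leftrightarrow(2)\Leftrightarrow(3)\Leftrightarrow(4)$ at once. Taking the Dedekind complete lattice $\mathbb{R}$ (which has the PSP) in place of $F$ in Theorem \ref{main} gives, for each $m$, that $\mathcal L^r(^mE)$ has the PSP iff $E^\ast$ has the PSP; adjoining the hypothesis on $F$ yields $(1)\Leftrightarrow(7)\Leftrightarrow(8)$. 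The only genuine work therefore concerns the polynomial conditions (5), (6), (9), (10), and the trivial implications ``for every $m$'' $\Rightarrow$ ``for some $m$'' reduce matters to one forward and one backward implication in each of the scalar and $F$-valued cases.

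The backward and cross implications are handled by positive embeddings together with Lemma \ref{nlemma}; crucially, that lemma only asks for a positive operator that is an isomorphism onto its range, not a bipositive one. Fixing $0\ne\psi\in(E^\ast)^+$ and $0\ne y\in F^+$, the linear maps $y\mapsto[x\mapsto\psi(x)^m y]$, $\varphi\mapsto[x\mapsto\varphi(x)\psi(x)^{m-1}y]$ and $p\mapsto[x\mapsto p(x)y]$ send positive elements to positive polynomials and are bounded below, so they positively embed $F$, $E^\ast$ and $\mathcal P^r(^mE)$, respectively, into $\mathcal P^r(^mE;F)$; their scalar analogues embed $E^\ast$ into $\mathcal P^r(^mE)$. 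By Lemma \ref{nlemma} this yields $(6)\Rightarrow(1)$, $(10)\Rightarrow(1)$, $(5)\Rightarrow(9)$, together with the remaining ``for every'' $\Rightarrow$ ``for some'' steps, closing every loop once the forward implications are established.

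For the forward direction I would first reduce the $F$-valued statement $(1)\Rightarrow(5)$ to its scalar shadow. The regular norm of a positive polynomial satisfies $\|P\|_r=\sup_{x\in B_E^{+}}\|P(x)\|_F$ (immediate from the description of $\mathcal P^r(^mE;F)$ via the positive symmetric projective tensor product). Hence, if $0\le P_n\overset{w}{\longrightarrow}0$ in $\mathcal P^r(^mE;F)$ but $\|P_n\|_r\not\to0$, one extracts $x_n\in B_E^{+}$ with $P_n(x_n)\ge0$ and $\|P_n(x_n)\|\ge\alpha$; for each $y^\ast\in(F^\ast)^+$ the scalar polynomials $q_n:=y^\ast\circ P_n$ are positive and weakly null, so the scalar case gives $\langle y^\ast,P_n(x_n)\rangle=q_n(x_n)\le\|q_n\|_r\to0$, whence $P_n(x_n)\overset{w}{\longrightarrow}0$ and, by the PSP of $F$, $\|P_n(x_n)\|\to0$ --- a contradiction. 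Everything thus rests on the scalar assertion: if $E^\ast$ has the PSP then $\mathcal P^r(^mE)$ has the PSP.

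This scalar assertion is the main obstacle, and it is \emph{not} a corollary of Theorem \ref{main}. Identifying $\mathcal P^r(^mE)$ with $(\hat{\otimes}^{m,s}_{|\pi|}E)^\ast$, one is tempted to deduce its PSP from that of $(E\hat{\otimes}_{|\pi|}\cdots\hat{\otimes}_{|\pi|}E)^\ast$ (Proposition \ref{tthh}) by positive complementation; but this fails, because the positive cone of the symmetric factor is the \emph{diagonal} cone generated by $\{x\otimes\cdots\otimes x:x\in E^+\}$, and positive polynomials such as $x\mapsto(\varphi(x)-\psi(x))^2$ are not sums of products of positive functionals. Consequently symmetrization is not positive for the diagonal cone, and the diagonal-restriction surjection $A\mapsto[x\mapsto A(x,\dots,x)]$ from $\mathcal L^r(^mE)$ onto $\mathcal P^r(^mE)$ admits no positive section, so the scalar statement must be proved directly. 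I would argue as in Theorem \ref{wot}: given $0\le q_n\overset{w}{\longrightarrow}0$ and $x_n\in B_E^{+}$ with $q_n(x_n)\ge\alpha$, pass to the positive part $(\check{q}_n)^+$ of the associated symmetric $m$-linear form $\check{q}_n$ inside the Banach lattice $\mathcal L^r(^mE)$ and set $\phi_n:=(\check{q}_n)^+(\,\cdot\,,x_n,\dots,x_n)$. Since $(\check{q}_n)^+\ge0$ as a multilinear operator and $x_n\in E^+$, each $\phi_n$ lies in $(E^\ast)^+$ and $\phi_n(x_n)=(\check{q}_n)^+(x_n,\dots,x_n)\ge\check{q}_n(x_n,\dots,x_n)=q_n(x_n)\ge\alpha$, so $\|\phi_n\|\ge\alpha$. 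The PSP of $E^\ast$ would then finish the proof --- \emph{provided} $\phi_n\overset{w}{\longrightarrow}0$. This last point is exactly the hard part: because both suppressed arguments are the \emph{moving} point $x_n$, the variable-separation trick that worked for linear operators in Theorem \ref{wot} is unavailable, and one cannot simply invoke weak sequential continuity of the lattice operations (which already fails in $L_1$). Overcoming it requires using genuine weak --- not merely weak$^\ast$/pointwise --- nullity of $(q_n)$: the example $q_n(x)=x_n^2$ on $E=c_0$, which is weak$^\ast$-null of norm one but not weakly null, shows that the hypothesis cannot be relaxed and pinpoints where the weak nullity of $(\phi_n)$ must come from.
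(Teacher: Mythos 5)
Your proposal has a genuine gap, and it sits exactly where you flagged it: the forward polynomial implications are never proved. You correctly reduce $(1)\Rightarrow(5)$ to the scalar claim that $E^\ast$ having the PSP forces $\mathcal{P}^r(^mE)$ to have the PSP, but your treatment of that claim ends with an admission of failure --- you construct $\phi_n:=(\check{q}_n)^+(\,\cdot\,,x_n,\dots,x_n)$ and then concede that you cannot establish $\phi_n\overset{w}{\longrightarrow}0$, which is precisely the step your contradiction argument needs. Since $(5),(6),(9),(10)$ are reached from $(1)$ only through this unproven scalar assertion, the equivalence of the ten conditions is not established.

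The deeper problem is that the ``main obstacle'' you describe rests on a wrong premise about the order structure. In this paper, $\mathcal{P}^r(^mE;F)$ is the Bu--Buskes space: a polynomial $P$ is positive exactly when its associated symmetric $m$-linear operator $\check{P}$ is positive (equivalently, the cone of $\hat{\otimes}_{m,s,|\pi|}E$ is generated by symmetrized tensors of positive vectors, not by the ``diagonal'' powers $x\otimes\cdots\otimes x$ with $x\in E^+$). Under that order, your example $x\mapsto(\varphi(x)-\psi(x))^2$ is regular but \emph{not} positive, so it obstructs nothing; and symmetrization $P\mapsto\check{P}$ \emph{is} a positive isomorphism onto its range --- this is exactly inequalities (2.13) of \cite{BuBuskes}, which the paper invokes. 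Consequently the paper's proof of the polynomial cases is short: $(3)\Rightarrow(5)$ and $(4)\Rightarrow(6)$ follow from Lemma \ref{nlemma} applied to $P\mapsto\check{P}$; $(5)\Leftrightarrow(9)$ and $(6)\Leftrightarrow(10)$ follow from Theorem \ref{wnuk} applied to $\mathcal{P}^r(^mE;F)\cong\mathcal{L}^r(\hat{\otimes}_{m,s,|\pi|}E;F)$, whose dual identification yields $\mathcal{P}^r(^mE)$; and $(10)\Rightarrow(1)$ uses the embedding $\varphi\mapsto\varphi(\cdot)\gamma(\cdot)^{m-1}$, which the paper does not merely assert to be bounded below (as you do) but justifies by citing \cite[Corollary 4.2]{preprint}. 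The portions of your argument that are sound --- the multilinear equivalences via Theorem \ref{main}, the reduction of the $F$-valued case to the scalar case, and the positive-embedding machinery for the backward implications --- cover only the parts that are routine in the paper's proof as well, while the part you could not complete is dissolved, not solved, once the correct cone is used.
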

\begin{proof}
$(1)\Leftrightarrow(2)$ follows from Theorem \ref{wnuk}. From \cite[Theorem 3.3]{BuBuskes} we have that $\mathcal{L}^r(^mE; F)$ is isometrically isomorphic and lattice homomorphic to $\mathcal{L}^r (E\hat{\otimes}_{|\pi|}\stackrel{(m)}{\cdots} \hat{\otimes}_{|\pi|}E; F) = \mathcal{L}^r (\hat{\otimes}_{m, |\pi|}E; F)$,
so $(1)\Leftrightarrow(3)$ and $(1)\Leftrightarrow(4)$ follow from Theorem \ref{main}. 
Thus, $(1) - (4)$ are equivalent.

From inequalities  (2.13) in \cite{BuBuskes} we know that the correspondence that associates a polynomial $P \in \mathcal{P}^r(^mE;F)$ to its (unique) associated symmetric $m$-linear operator ${\check P} \in \mathcal{L}^r(^mE;F)$ is a positive isomorphism into. So, $\mathcal{P}^r(^mE;F)$ is positively isomorphic to a subspace of $\mathcal{L}^r(^mE;F)$.  Lemma \ref{nlemma} gives $(3)\Rightarrow(5)$ and $(4)\Rightarrow(6)$.

Using again that $\mathcal{L}^r(^mE; F)$ is isometrically isomorphic and lattice homomorphic to $\mathcal{L}^r (\hat{\otimes}_{m, |\pi|}E; F)$ and calling on \cite[Theorem 3.3]{BuBuskes} to get that $\left(\hat{\otimes}_{m, |\pi|}E \right)^*$ is isometrically isomorphic and lattice homomorphic to ${\mathcal L}^r(^mE)$, from Theorem \ref{wnuk} we get $(3)\Leftrightarrow(7)$ and $(4)\Leftrightarrow(8)$.

Let $\hat{\otimes}_{m,s, |\pi|}E $ denote the positive $m$-fold symmetric projective tensor product of $E$ (see \cite{BuBuskes}). By \cite[Proposition 3.4]{BuBuskes} we know that $\mathcal{P}^r(^mE; F)$ is isometrically isomorphic and lattice homomorphic to  $\mathcal{L}^r (\hat{\otimes}_{m,s, |\pi|}E; F)$ and, in particular, $\left(\hat{\otimes}_{m,s, |\pi|}E \right)^*$ is isometrically isomorphic and lattice homomorphic to $ {\mathcal P}^r(^mE)$. Theorem \ref{wnuk} gives $(5)\Leftrightarrow(9)$ and $(6)\Leftrightarrow(10)$.\\
\indent Since $(9)\Rightarrow(10)$ is obvious, all that is left to prove is $(10)\Rightarrow(1)$. To prove this implication, fix $\gamma \in (E^*)^+$, $\|\gamma\| = 1$, and consider the operator
$$T \colon E^* \longrightarrow {\mathcal P}^r(^mE)~,~T(\varphi)(x) = \varphi(x)\gamma(x)^{m-1}.  $$
In \cite[Corollary 4.2]{preprint} it is proved that $T$ is a positive isomorphism onto its range, that is, $E^*$ is positively isomorphic to a subspace of ${\mathcal P}^r(^mE)$. Lemma \ref{nlemma} completes the proof.
\end{proof}

\begin{corollary} The following are equivalent for a Banach lattice $E$:
\begin{itemize}
    \item [(1)] $E^\ast$ has the PSP.

    \item[(2)] $\mathcal{L}^r(^mE)$ has the PSP for every $ m \in \mathbb{N}$.
    \item[(3)] $\mathcal{L}^r(^mE)$ has the PSP for some $ m \in \mathbb{N}$.
    \item[(4)] $\mathcal{P}^r(^mE)$ has the PSP for every $ m \in \mathbb{N}$.
    \item[(5)] $\mathcal{P}^r(^mE)$ has the PSP for some $ m \in \mathbb{N}$.
\end{itemize}
\end{corollary}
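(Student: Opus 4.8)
The plan is to obtain the corollary as the scalar-valued specialization $F = \mathbb{R}$ of the preceding theorem. First I would observe that the scalar field $\mathbb{R}$, viewed as a one-dimensional Banach lattice, is Dedekind complete and has the PSP; the latter holds because on a finite-dimensional space the weak and norm topologies coincide, so every weakly null sequence --- in particular every weakly null sequence of positive elements --- is automatically norm null. Hence $F = \mathbb{R}$ satisfies both standing hypotheses of the theorem (Dedekind completeness and the PSP).

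Next I would recall the notational conventions $\mathcal{L}^r(^mE) = \mathcal{L}^r(^mE;\mathbb{R})$ and $\mathcal{P}^r(^mE) = \mathcal{P}^r(^mE;\mathbb{R})$ fixed just before the theorem. Substituting $F = \mathbb{R}$ into conditions (1), (7), (8), (9) and (10) of the theorem, and noting that by the first step the clause ``$F$ has the PSP'' is now vacuous, these five conditions read precisely as conditions (1), (2), (3), (4) and (5) of the corollary, respectively. Since the theorem asserts that its items (1), (7), (8), (9), (10) are mutually equivalent, the corresponding five conditions of the corollary are equivalent as well.

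There is no genuine obstacle here: the whole argument is a direct specialization, and the only point needing verification --- that $\mathbb{R}$ has the PSP --- is immediate from finite-dimensionality. The only thing to be careful about is the bookkeeping of which numbered items of the theorem match which items of the corollary, together with the remark that once $F = \mathbb{R}$ the auxiliary hypothesis ``$F$ has the PSP'' appearing in items (7)--(10) may simply be dropped.
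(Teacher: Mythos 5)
Your proposal is correct and is exactly the intended argument: the paper states this corollary without proof precisely because it is the specialization $F=\mathbb{R}$ of the preceding theorem, and your bookkeeping (matching the theorem's items (1), (7)--(10) to the corollary's (1)--(5), after noting that $\mathbb{R}$ is Dedekind complete and trivially has the PSP) is accurate.
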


\medskip

\noindent{\bf Acknowledgements.} The authors thank Vladimir G. Troitsky, Anthony W. Wickstead and Jos\'e Lucas P. Luiz for their very important suggestions.
\bibliographystyle{amsplain}

\begin{thebibliography}{99}
\bibitem{AB}
C.D. Aliprantis and O. Burkinshaw, Positive Operators, Springer, 2006.

\bibitem{aqzzouz} B. Aqzzouz and A. Elbour, Some characterizations of almost Dunford-Pettis operators and applications, {\it Positivity} {\bf 15} (2011), 369--380.

\bibitem{ardakani} H. Ardakani, S.M. Moshtaghioun, S.M.S.M. Mosadegh and M. Salimi, The strong Gelfand-Phillips property in Banach lattices, {\it Banach J. Math.} Anal. {\bf 10} (2016), 15--26.

\bibitem{baklouti} H. Baklouti and M. Hajji, Schur operators and domination problem, {\it Positivity} {\bf 21} (2017), 35--48.

\bibitem{pams} G. Botelho and P. Rueda, The Schur property on projective and injective tensor products, {\it Proc. Amer. Math. Soc.} {\bf 137} (2009), 219--225.

\bibitem{BuBuskes} Q. Bu and G. Buskes, Polynomials on Banach lattices and positive tensor products, {\it J. Math. Anal. Appl.} {\bf 388} (2012), 845--862.

\bibitem{sean} S. Dineen, Complex Analysis on Infinite Dimensional Spaces, Springer, 1999.

\bibitem{Fr1}
D.H. Fremlin, Tensor products of Archimedean vector lattices, {\it Amer. J. Math.} {\bf 94} (1972), 778-798.

\bibitem{fremlin} D.H. Fremlin, Tensor products of Banach lattices, Math. Ann. {\bf 211} (1974) 87--106.

\bibitem{gg} M. Gonz\'alez and J. Guti\'errez, The Dunford-Pettis property on tensor products, {\it Math. Proc. Cambridge Philos. Soc.} {\bf 131} (2001), 185--192.

\bibitem{preprint}
D. Ji, K. Navoyan and Q. Bu, Complementation in Fremlin vector lattice symmetric tensor products II, Ann. Funct. Anal. {\bf 11} (2020), 47--61.

\bibitem{kaminska} A. Kaminska and M. Mastylo, The Schur and (weak) Dunford-Pettis properties in Banach lattices, {\it J. Austral. Math. Soc.} {\bf 73} (2002), 251--278.


\bibitem{Lust}
F. Lust, Produits tensoriels injectifs d'espaces de Sidon, {\it Colloq. Math.}, {\bf 32} (1975), 285-289.





\bibitem{Mey}
P. Meyer-Nieberg, Banach Lattices, Springer-Verlag, 1991.

\bibitem{moussa} M. Moussa and K. Bouras, About positive weak Dunford-Pettis operators on Banach lattices, {\it J. Math. Anal. Appl.} {\bf 381} (2011), 891--896.

\bibitem{jorge} J. Mujica, {\it Complex Analysis in Banach Spaces}, Dover Publications, 2010.

\bibitem{retbi} A. Retbi, A complement of positive weak almost Dunford-Pettis operators on Banach lattices, {\it Arch. Math.} (Brno)  {\bf 55}  (2019),  no. 1, 1--6.

\bibitem{R3}
R.A. Ryan, The Dunford-Pettis property and projective tensor products, {\it Bull. Polish Acad. Sci.Math.} {\bf 35} (1987), 785--792.


\bibitem{sanchez} J.A. S\'anchez, Operators on Banach lattices (Spanish), Ph.D. Thesis, Universidad Complutense de Madrid, Madrid, 1985.

\bibitem{Schep}
A.R. Schep, Factorization of positive multilinear maps, {\it Illinois J. Math.} {\bf 28} (1984), 579-591.

\bibitem{TRAD}
P. Tradacete, Positive Schur properties in spaces of regular operators, {\it Positivity} {\bf 19} (2015), 305-316.

\bibitem{W1}
W. Wnuk, Some remarks on the positive Schur property in spaces of operators, {\it Funct. Approx. Comment. Math.} {\bf 21} (1992), 65-68.

\bibitem{wnuk94} W. Wnuk, Banach lattices with the weak Dunford-Pettis property, Atti Sem. Mat. Fis. Univ. Modena
{\bf 42} (1994), vol. 1, 227--236.

\bibitem{wnuk2009} W. Wnuk, The strong Schur property in Banach lattices, {\it Positivity} {\bf 13} (2009), 435--441.

\bibitem{wnuk2013} W. Wnuk, On the dual positive Schur property in Banach lattices, {\it Positivity} {\bf 17} (2013), 759--773.
\end{thebibliography}

\end{document}